\newcounter{qcounter}
\define\bP{\mathbb{P}}
\define\isoto{\xrightarrow{\sim}}
\define\onto{\twoheadrightarrow}
\define\Spec{\mathrm{Spec}}
\newcommand{\ttmat}[4]{\left( \begin{array}{cc}
#1 & #2 \\
#3 & #4
\end{array}
\right)}
\newcommand{\Z}{\mathbb{Z}}
\newcommand{\Q}{\mathbb{Q}}
\newcommand{\F}{\mathbb{F}}
\newcommand{\sO}{\mathcal{O}}
\newcommand{\I}{\mathcal{I}}
\newcommand{\m}{\mathfrak{m}}
\newcommand{\Aut}{\mathrm{Aut}}
\newcommand{\End}{\mathrm{End}}
\newcommand{\X}{\mathfrak{X}}
\define\cA{\mathcal{A}}
\define\GL{{\mathrm{GL}}}
\define\cC{\mathcal{C}}
\define\cR{\mathcal{R}}
\define\A{\mathcal{A}}
\define\cB{\mathcal{B}}
\define\fI{\mathfrak{I}}
\define\full{{\mathrm{full}}}
\define\Prim{{\mathrm{Prim}}}
\define\Isom{{\mathrm{\underline{Isom}}}}
\define\uHom{{\mathrm{\underline{Hom}}}}
\define\Ann{{\mathrm{Ann}}}
\define\KMD{{\mathrm{KM+D}}}
\define\gr{{\mathrm{gr}}}
\define\Zp{{\Z_{(p)}}}
\define\Tr{\mathrm{Tr}}
\newtheorem{thm}{Theorem}[section]
\newtheorem*{thmB}{Theorem \ref{thm: KMD}}
\newtheorem*{keylem}{Key Lemma}
\newtheorem{cor}[thm]{Corollary}
\newtheorem{prop}[thm]{Proposition}
\newtheorem{lem}[thm]{Lemma}
\theoremstyle{definition}
\newtheorem{defn}[thm]{Definition}
\newtheorem{exmp}[thm]{Example}
\theoremstyle{remark}
\newtheorem{rem}[thm]{Remark}
\let\c@equation\c@thm
\numberwithin{equation}{section}
\title{Full level structures revisited: pairs of roots of unity}
\author{Preston Wake}
\date{}
\begin{document}
\maketitle

\begin{abstract}
We introduce a notion of full level structure for the group scheme $(\mu_p \times \mu_p) _{/\Z}$, and show that scheme of full level structures is flat over $\Z$. 
\end{abstract}

\section{Introduction}

\subsection{The problem} Let $G$ be a finite, locally free group scheme over a scheme $S$, and suppose that over the locus $S[1/p]$ where $p$ is invertible, $G \times_S S[1/p]$ is \'etale-locally isomorphic to the constant group scheme $(\Z/p^r\Z)^g$. We are interested in finding a closed subscheme of $\uHom_{S}((\Z/p^r\Z)^g,G)$, which we will call the scheme of {\em full homomorphisms}. We will denote this scheme by
$$
\uHom_{S}^\full((\Z/p^r\Z)^g,G) \subset \uHom_{S}((\Z/p^r\Z)^g,G).
$$
The scheme of full homomorphisms should satisfy the following two key properties:
\begin{list}{}{}
\item[1)] There is an equality 
\[\uHom_{S}^\full((\Z/p^r\Z)^g,G) \times_S S[1/p] = \Isom_{S[1/p]}((\Z/p^r\Z)^g,G \times_S S[1/p])
\] 
of closed subschemes of $\uHom_{S[1/p]}((\Z/p^r\Z)^g,G\times_S S[1/p])$.
\item[2)] $\uHom_{S}^\full((\Z/p^r\Z)^g,G)$ is flat over $S$.
\end{list}

The motivation for this problem comes from the study of integral models of Shimura varieties. If $X$ is a Shimura variety over $\Q_p$ with a universal abelian variety $A$ of dimension $g$, then there are interesting covers 
$$
X(p^r) = \Isom_{X}((\Z/p^r\Z)^{2g},A[p^r])
$$
of $X$. If $\X$ is an integral model for $X$, and $\A$ is an extension of $A$ to $\X$, then the problem of defining $\uHom_{\X}^\full((\Z/p^{r}\Z)^{2g},\A[p^r])$ is the same as finding a flat model for the cover $X(p^r)$.

\subsection{Previous results} The most basic case is when $G=\mu_{p^r}$. In this case $\uHom_{S}(\Z/p^r\Z,\mu_{p^r})=\mu_{p^r}(S)$, and $\uHom_{S}^\full(\Z/p^r\Z,\mu_{p^r})$ should be the ``scheme of generators". The condition for a section to be a generator is that it satisfy the cyclotomic polynomial.

Oort and Tate \cite{oort-tate} gave a classification of group schemes of order $p$ (that is, the case when $g=r=1$). In modern terminology, they give an explicit description of the stack of finite flat group schemes of order $p$ and of the universal group scheme over it. Using this description, at least over $\Z_p$, it is possible to define the ``scheme of generators" (see \cite[Theorem 3.3.1]{haines-rapoport} and the paragraph following it). Oort and Tate's work was used by Deligne and Rapoport \cite[Section V.2, pg. 103]{deligne-rapoport} in their study of integral models of modular curves with $\Gamma_1(p)$-level structure -- that is, the moduli space of elliptic curves with a point of order $p$.

More generally, when $G$ is embedded in a curve $C$ over $S$, there is a definition of full homomorphism, based on ideas of Drinfeld \cite{drinfeld}. If $h \in \uHom_{S}((\Z/p^r\Z)^g,G)$ denotes the universal homomorphism, then the scheme $\uHom^\full_{S}((\Z/p^r\Z)^g,G)$ is cut out by the equation
$$
G = \sum_{x \in (\Z/p^r\Z)^g} [h(x)]
$$
of Cartier divisors in $C$. When $G$ has order $p$, this definition coincides with the Oort-Tate definition \cite[Remark 3.3.2]{haines-rapoport}. Drinfeld's definition was used by Katz and Mazur \cite{katz-mazur} to give a very satisfying theory of integral models of modular curves with arbitrary level structure. More recently, it was used by Harris and Taylor \cite{harris-taylor} to study certain special unitary Shimura varieties in the course of their proof of the local Langlands correspondence for $GL_n$. 

If $G$ is not embedded in a curve, then this definition fails: points on $G$ cannot be interpreted as Cartier divisors. Katz and Mazur \cite[Section 1.13]{katz-mazur} introduced a notion of {\em $\times$-homomorphism} of finite flat schemes over $S$ (see Section \ref{subsec: times homom} for a review of this notion). They show that the scheme $\uHom^\times_{S}((\Z/p^r\Z)^g,G)$ of $\times$-homomorphisms agrees with Drinfeld's notion when $G$ is embedded in a curve. They suggest that $\times$-homomorphisms may be useful for studying moduli of abelian varieties of higher dimension \cite[Introduction, pg. xiii]{katz-mazur}. However, Chai and Norman \cite[Section A.2]{chai-norman} prove that, even for the group scheme $G=(\mu_p \times \mu_p) _{/\Zp}$ the scheme of $\times$-homomorphisms is not flat over the base (note that $(\mu_p \times \mu_p) _{/\Zp}$ cannot be embedded in a curve -- see Section \ref{subsec: chai-norman} for an explanation of this). This deficiency was also remarked upon by Pappas \cite[Page 45]{pappas} in his study of level structures for Hilbert modular varieties.

\subsection{Result} We study the group scheme $G=(\mu_p \times \mu_p) _{/\Z}$, and give a definition of $\uHom_{\Z}^\full((\Z/p\Z)^2,\mu_p \times \mu_p)$. Our main result is then the following.

\begin{thm}
\label{thm: main}
The scheme $\uHom_{\Z}^\full((\Z/p\Z)^2,\mu_p \times \mu_p)$ is flat over $\Z$.
\end{thm}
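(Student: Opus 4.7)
The ambient scheme $\uHom_\Z((\Z/p\Z)^2,\mu_p\times\mu_p)$ is just $\mu_p^4$: explicitly, $H = \Spec\Z[x_1,x_2,y_1,y_2]/(x_1^p-1,x_2^p-1,y_1^p-1,y_2^p-1)$, where the universal homomorphism sends the $i$th standard basis vector of $(\Z/p\Z)^2$ to $(x_i,y_i)\in(\mu_p\times\mu_p)(H)$. The scheme $\uHom^\full$ will be a closed subscheme of $H$ cut out by explicit equations in the $x_i,y_j$. By property (1) of the introduction, after inverting $p$ this closed subscheme coincides with $\Isom_{\Z[1/p]}((\Z/p\Z)^2,\mu_p^2)$, a $\GL_2(\F_p)$-torsor over $\Z[1/p]$; in particular it is finite étale of constant degree $N:=|\GL_2(\F_p)|=p(p-1)^2(p+1)$.

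Flatness over $\Z$ will then follow from a length computation at $p$. Indeed, $\uHom^\full$ is a finite $\Z$-scheme, flat away from $p$, so flatness globally is equivalent to torsion-freeness over $\Z_{(p)}$; combined with the previous paragraph, this is in turn equivalent to the equality
\[
\lth_{\F_p}\!\bigl(\mathcal{O}(\uHom^\full)\otimes_\Z\F_p\bigr)=N,
\]
the inequality $\geq N$ being automatic from the generic rank. Over $\F_p$, every section $\zeta$ of $\mu_p$ satisfies $(\zeta-1)^p=0$, so $H_{\F_p}=\Spec A$ with
\[
A := \F_p[u_1,u_2,v_1,v_2]/(u_1^p,u_2^p,v_1^p,v_2^p),\qquad u_i:=x_i-1,\ v_j:=y_j-1,
\]
a local Artin ring of total length $p^4$. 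The theorem thus reduces to a concrete calculation of the $\F_p$-dimension of $A/I_{\F_p}$, where $I_{\F_p}$ is the mod-$p$ reduction of the defining ideal of $\uHom^\full$, rewritten in the coordinates $u_i,v_j$.

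The main obstacle is hidden in this last reduction. By the Chai--Norman counterexample, the Katz--Mazur $\times$-homomorphism definition imposes too few relations for $G=\mu_p\times\mu_p$ to produce a flat special fiber. The new definition of $\uHom^\full$ must therefore impose strictly more relations, yet gently enough not to disturb the generic fiber. The creative core of the argument must be the identification of the correct defining ideal — the title's ``pairs of roots of unity'' presumably signals a Drinfeld-style condition binding the four sections $x_1,x_2,y_1,y_2$ in pairs rather than singly — after which the length check in $A$ becomes a finite combinatorial verification.
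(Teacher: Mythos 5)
Your setup is correct and matches the paper's own reduction almost verbatim: the ambient scheme is $\mu_p^4$ with coordinate ring $\Z[S,T,U,V]/(S^p-1,\dots,V^p-1)$; over $\Z[1/p]$ the full-level scheme is finite \'etale of degree $|\GL_2(\F_p)| = p^4-p^3-p^2+p$; flatness reduces to $p$-torsion-freeness over $\Z_{(p)}$; and this is equivalent to the equality $\dim_{\F_p}(A/I) = |\GL_2(\F_p)|$ where $A = \F_p[s,t,u,v]/(s^p,t^p,u^p,v^p)$, the inequality $\ge$ holding automatically. This is precisely the paper's Lemma \ref{lem: reduction to counting}.

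However, the proposal stops exactly where the proof begins. Two things are missing, and they constitute essentially all of the mathematical content. First, the proposal treats the definition of $\uHom^\full$ as unknown and only speculates (``presumably signals a Drinfeld-style condition binding the four sections\ldots in pairs''). The actual definition is available in Section~\ref{sec: defn}: an element of $\mu_p\times\mu_p$ is primitive iff $\Phi_p(x)\Phi_p(y)=0$, two elements are linearly independent iff every nontrivial $\F_p$-linear combination is primitive, and $\uHom^\full$ is cut out by demanding that both the rows and the columns of the universal $2\times 2$ matrix be linearly independent. Second, and more seriously, the proposal waves off the dimension count as ``a finite combinatorial verification.'' It is not: the ideal $I$ has $2(p+1)$ natural generators and the intersections between the sub-ideals they generate are delicate. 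The paper's computation occupies the rest of Section~3 and hinges on several nonobvious ingredients: singling out the two especially simple generators $c=(tv)^{p-1}$ and $r=(su)^{p-1}$; exploiting both the left and right $\GL_2(\F_p)$-actions and the transpose involution $\iota$ to reduce every intersection to an intersection with the principal ideal $cA$ or $rA$; a Key Lemma showing that adjoining the generator $c_0 = (su)^{p-1}$ can cut the $\m_{(s,u)}$-depth of the intersection with $cA$ by at most one; and an induction (Propositions \ref{prop: intersections} and \ref{prop:dimension inequalities}) yielding exact dimension formulas for all $C(J)+R(J')$ via binomial-coefficient bookkeeping. None of this appears in the proposal, so what you have is a correct framing of the problem rather than a proof.
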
 

Our definition of $\uHom_{\Z}^\full$ is given is Section \ref{sec: defn}. It does not use Katz and Mazur's notion of ``full set of sections" or ``$\times$-homomorphism", but it can be related to their notion as follows. Let $h \in \uHom_{\Z}((\Z/p\Z)^2,\mu_p \times \mu_p)$ denote the universal homomorphism. Let $\uHom_{\Z}^\KMD((\Z/p\Z)^2,\mu_p \times \mu_p)$ be the closed subscheme cut out by the conditions that both $h$ {\em and} its Cartier dual $h^\vee$ are $\times$-homomorphisms in the sense of Katz-Mazur. 

\begin{thm}
\label{thm: KMD}
There is an equality
$$
\uHom_{\Z}^\KMD((\Z/p\Z)^2,\mu_p \times \mu_p) = \uHom_{\Z}^\full((\Z/p\Z)^2,\mu_p \times \mu_p)
$$
of closed subschemes of $\uHom_{\Z}((\Z/p\Z)^2,\mu_p \times \mu_p)$.
\end{thm}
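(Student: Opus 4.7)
The plan is to compare the defining ideals of the two closed subschemes directly in explicit coordinates. Writing $\mu_p = \Spec \Z[t]/(t^p-1)$, I identify
\[
\uHom_\Z((\Z/p\Z)^2, \mu_p \times \mu_p) = \Spec R, \qquad R = \Z[a,b,c,d]/(a^p-1,b^p-1,c^p-1,d^p-1),
\]
with the universal homomorphism $h$ sending the two standard basis vectors of $(\Z/p\Z)^2$ to $(a,b)$ and $(c,d)$. Under the self-duality of $\mu_p \times \mu_p$ provided by the Weil pairing, the Cartier dual $h^\vee$ corresponds to the transpose of the $2 \times 2$ matrix of entries, so $\uHom^\KMD$ is the intersection inside $\Spec R$ of the $\times$-homomorphism locus with its image under this transposition.

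For the inclusion $\uHom^\full \subseteq \uHom^\KMD$ I argue formally. After inverting $p$, every isomorphism of \'etale group schemes is trivially a $\times$-homomorphism, and the same holds for its Cartier dual (which is again an isomorphism), so $\Isom_{\Z[1/p]} \subseteq \uHom^\KMD \otimes_\Z \Z[1/p]$. Combined with property (1) of the introduction, which gives $\uHom^\full \otimes_\Z \Z[1/p] = \Isom_{\Z[1/p]}$, this yields $\uHom^\full \otimes_\Z \Z[1/p] \subseteq \uHom^\KMD \otimes_\Z \Z[1/p]$. Since $\uHom^\full$ is $\Z$-flat by Theorem \ref{thm: main}, its coordinate ring has no $p$-torsion, so any element of the defining ideal of $\uHom^\KMD$ that vanishes on $\uHom^\full$ after inverting $p$ already vanishes on $\uHom^\full$ itself, giving the inclusion of closed subschemes.

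The reverse inclusion $\uHom^\KMD \subseteq \uHom^\full$ requires an explicit calculation. Unpacking \cite[Section 1.13]{katz-mazur}: by multiplicativity it suffices to test the full-set-of-sections identity $\prod_{(i,j) \in (\Z/p\Z)^2} f(h(i,j)) = N(f)$ on a generating set of functions on $\mu_p \times \mu_p$, such as the characters $(x,y) \mapsto x^m y^n$. The cyclotomic identity $\prod_{\zeta \in \mu_p}(T - \zeta) = T^p - 1$ then collapses this to a short, explicit list of polynomial relations in $a, b, c, d$; transposing the matrix gives a parallel list from the $\times$-condition on $h^\vee$. Comparing this combined system with the defining equations of $\uHom^\full$ from Section \ref{sec: defn} by direct manipulation in $R$ should yield the remaining containment of ideals.

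The main obstacle is precisely this characteristic-$p$ comparison. The $\times$-condition on $h$ alone only constrains the ``row products'' of the matrix entries and is insensitive to interactions between columns, and the transposed condition likewise only sees columns. One must verify that the combined row-and-column system is tight enough to recover the full-structure ideal on the nose, rather than cutting out a strictly larger closed subscheme. This is where the symmetry between $h$ and $h^\vee$ — the very motivation for including $h^\vee$ in the Katz-Mazur condition — must be exploited in an essential way.
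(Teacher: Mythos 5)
Your overall architecture is sound and mirrors the paper's in spirit: establish one containment of ideals, and leverage the $\Z$-flatness of $\uHom^\full$ (Theorem \ref{thm: main}) together with the generic equality on $\Z[1/p]$ to upgrade a containment to an equality. Your paragraph showing $\uHom^\full \subseteq \uHom^\KMD$ is a correct application of this flatness principle (the paper instead obtains that direction as a consequence, but this is a cosmetic difference).

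The real gap is that you never prove the remaining containment $\uHom^\KMD \subseteq \uHom^\full$, i.e.\ $\I \subset \I^\KMD$. Your last two paragraphs sketch a plan and then explicitly concede the obstacle (``should yield the remaining containment'', ``The main obstacle is precisely this characteristic-$p$ comparison'') without resolving it. This is the content of the paper's Proposition \ref{prop: map from KMD}, and it is not automatic. The paper's route is substantially different from your sketch and worth internalizing: first, $\I^\KMD$ is stable under both $\GL_2(\F_p)$ actions on $\cA$ (because isomorphisms are $\times$-homomorphisms and $\times$-homomorphisms are closed under composition), which reduces the whole problem to showing that the two elements $\Phi_p(S)\Phi_p(U)$ and $\Phi_p(S)\Phi_p(T)$ lie in $\I^\KMD$; and second, these drop out of a \emph{trace} comparison $\Tr(h^*(X)) - \Tr(X) = \Phi_p(S)\Phi_p(U)$, which is the $T^{N-1}$-coefficient of the characteristic-polynomial identity \eqref{eq: times homom}, and is linear and therefore far easier to evaluate than the norm/product identity $\prod f(h(i,j)) = N(f)$ you propose to test. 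Your proposed route via products would require multiplying out $p^2$ factors and is much harder to compare against the cyclotomic generators of $\I$.

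One further remark on your final paragraph: the worry that the combined row-and-column system might cut out something strictly larger than $\uHom^\full$ is already answered by your own second paragraph. You have shown $\I^\KMD \subset \I$; once you prove $\I \subset \I^\KMD$, the two containments give equality outright, and no separate ``tightness'' check is needed. The flatness argument has already done that work for you.
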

\begin{rem}
For general group schemes, the $\KMD$-condition does not give a flat model. See Example \ref{exmp: counter example} for a simple case. The discussion following that example explains why one also should not expect the $\KMD$-type level structures be flat if $G=A[p]$ for a supersingular abelian variety $A$ of dimension $\dim(A)>1$.
\end{rem}

\subsection{Acknowledgments} It's my pleasure to thank Bob Kottwitz for many helpful discussions and for his advice and encouragement throughout this project. His careful reading of an earlier draft led to great improvements in the exposition. I also thank Kazuya Kato for encouraging me to study level structures on abelian varieties. I thank the referee for making some clarifying suggestions.

The results in this paper were first found experimentally using the SAGE mathematics software \cite{sage}, before they were proven. Although none of the proofs in the paper rely on SAGE computations, the proofs could not have been found by me without them. I thank the SAGE developers for providing such a useful tool.

\section{Full homomorphisms}
\label{sec: defn}

In this section we give our definition of $\uHom_{\Z}^\full((\Z/p\Z)^2,\mu_p \times \mu_p)$. It is based on the description of $\GL_2(\F_p)$ as the set of matrices with linearly independent columns -- with two main changes. The first is in the definition of linearly independent. Vectors in a vector space are called linearly independent if any non-trivial linear combination is non-zero. We instead require that any non-trivial linear combination is {\em primitive}, in a certain sense. 

The second difference is that, for matrices, one of the miracles of linear algebra is that the rows are linearly independent if and only if the columns are linearly independent. In our case, this will not be so, and we have to require that both the rows and columns are linearly independent.

\subsection{Primitive vectors}
 We define the notion of a primitive vector in $\mu_p \times \mu_p$. Morally, an element of $\mu_p \times \mu_p$ is {\em primitive} if at least one of the coordinates satisfies the cyclotomic polynomial. More precisely, if 
$$
\cB = \Z[x,y]/(x^p-1,y^p-1)
$$
denotes the coordinate ring of $\mu_p \times \mu_p$, then we define the primitive vectors
$$
(\mu_p \times \mu_p)^\Prim \subset \mu_p \times \mu_p
$$
as the closed subscheme cut out by the ideal generated by $\Phi_p(x)\Phi_p(y)$, where $\Phi_p(X)=1 + X + \dots X^{p-1}$. 

Just as the subscheme of primitive roots of unity is stable under the automorphisms of $\mu_p$, the following lemma shows that $(\mu_p \times \mu_p)^\Prim$ is stable under the automorphisms of $\mu_p\times \mu_p$.

\begin{lem}\label{lem: prim is stable}
The subscheme $(\mu_p \times \mu_p)^\Prim \subset \mu_p \times \mu_p$ is stable under the action of $\Aut(\mu_p \times \mu_p) \cong \GL_2(\F_p)$.
\end{lem}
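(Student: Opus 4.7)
The plan is to prove something slightly stronger: the generator $\Phi_p(x)\Phi_p(y)$ is not merely preserved up to scalar, it is actually fixed on the nose by every element of $\Aut(\mu_p \times \mu_p) \cong \GL_2(\F_p)$. The key observation is that in the coordinate ring $\cB = \Z[x,y]/(x^p-1, y^p-1)$, the element $\Phi_p(x)\Phi_p(y)$ admits the description
$$
\Phi_p(x)\Phi_p(y) = \sum_{i=0}^{p-1} \sum_{j=0}^{p-1} x^i y^j,
$$
and since the exponents of $x$ and $y$ only matter modulo $p$, this is naturally a sum indexed by $(i,j) \in \F_p^2$. The action of $\GL_2(\F_p)$ on $\cB$ is induced by its action on the character lattice of $\mu_p \times \mu_p$, which is $\F_p^2$, so it simply permutes the monomials $x^i y^j$; a sum over all of $\F_p^2$ is therefore invariant.

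To carry this out, I would fix a matrix $\gamma = \ttmat{a}{b}{c}{d} \in \GL_2(\F_p)$ and let $\sigma_\gamma$ denote the corresponding automorphism of $\mu_p \times \mu_p$. With the standard convention for the isomorphism $\Aut(\mu_p \times \mu_p) \cong \GL_2(\F_p)$, the induced ring automorphism $\sigma_\gamma^*$ of $\cB$ sends $x \mapsto x^a y^c$ and $y \mapsto x^b y^d$ (any other convention gives the transpose action, which does not affect the argument). Then a direct expansion gives
$$
\sigma_\gamma^*(\Phi_p(x)\Phi_p(y)) = \sum_{(i,j) \in \F_p^2} x^{ai + bj} y^{ci + dj} = \sum_{(k,\ell) \in \F_p^2} x^k y^\ell = \Phi_p(x)\Phi_p(y),
$$
the middle equality because $(i,j) \mapsto (ai+bj, ci+dj)$ is a bijection of $\F_p^2$ (this is precisely the invertibility of $\gamma$), and the outer monomials only depend on exponents modulo $p$ since $x^p = y^p = 1$ in $\cB$.

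Since the generator of the defining ideal of $(\mu_p \times \mu_p)^{\Prim}$ is fixed by $\sigma_\gamma^*$, the ideal itself is preserved, which is exactly the content of the lemma. There is no real obstacle here; the only mild subtlety is making sure the conventions for the $\GL_2(\F_p)$-action on $\cB$ are set up consistently so that the induced action on exponents is the standard one on $\F_p^2$, after which the argument is the routine observation that summing a character of a finite abelian group over a coset of a full subgroup gives an invariant.
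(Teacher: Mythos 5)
Your proof is correct and follows essentially the same line as the paper's: both observe that $\Phi_p(x)\Phi_p(y)$ equals the sum $\sum_{(i,j)\in\F_p^2} x^i y^j$, that the $\GL_2(\F_p)$-action permutes these monomials via its linear action on exponents in $\F_p^2$, and hence that the generator of the defining ideal is literally fixed. You spell out the matrix entries and the bijection on $\F_p^2$ a bit more explicitly than the paper does, but there is no substantive difference in approach.
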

\begin{proof}
It suffices to show that the ideal $\Phi_p(x)\Phi_p(y) \cB$ is stable under the action of $\GL_2(\F_p)$ on $\cB$. In terms of the basis $\{x^iy^j \ | \ (i,j) \in \F_p^2\}$ of $\cB$ as a $\Z$-module, the action of $\gamma \in \GL_2(\F_p)$ is given by $x^iy^j \mapsto x^{i'}y^{j'}$ where $(i',j')=\gamma(i,j)$. Then we compute
$$
\gamma\Phi_p(x)\Phi_p(y) = \gamma \sum_{(i,j)\in \F_p^2} x^iy^j = \sum_{(i',j')\in \F_p^2} x^{i'}y^{j'}=\Phi_p(x)\Phi_p(y).
$$
\end{proof}

Note that, under any isomorphism $(\mu_p \times \mu_p) \otimes \Z[\zeta_p,1/p] \cong (\Z/p\Z)^2$, the subset   $(\mu_p \times \mu_p)^\Prim \otimes \Z[\zeta_p,1/p]$ is identified with the non-zero vectors.

\subsection{Linear independence} We say that a pair of elements of $\mu_p \times \mu_p$ are {\em linearly independent} if any non-trivial linear combination is primitive. 

\begin{defn}
The subscheme $\uHom^\full_{\Z}((\Z/p\Z)^2,\mu_p \times \mu_p)$ of $\uHom_{\Z}((\Z/p\Z)^2,\mu_p \times \mu_p)$ is the scheme cut out by the conditions that the rows and columns of the universal homomorphism are linearly independent. 
\end{defn}

In order to make this definition explicit, we first introduce some notation. Let $\cA$ be the coordinate ring of $\uHom_{\Z}((\Z/p\Z)^2,\mu_p \times \mu_p) \simeq \mu_p \times \mu_p \times \mu_p \times \mu_p$. We have
$$
\cA = \Z[S,T,U,V]/(S^p-1,T^p-1,U^p-1,V^p-1).
$$
We write the universal homomorphism as
$$
h=\ttmat{S}{T}{U}{V}.
$$
Let $\I \subset \cA$ be the ideal generated by
$$
\{\Phi_p(S^aU^b)\Phi_p(T^aV^b),\Phi_p(S^aT^b)\Phi_p(U^aV^b) \ | \ (a,b)\in \F_p^2 \setminus \{(0,0)\}\}.
$$
Then $\uHom^\full_{\Z}((\Z/p\Z)^2,\mu_p \times \mu_p) = \Spec(\cA/\I).$

\begin{prop}
\label{prop: generic}
Under any isomorphism $(\mu_p \times \mu_p) \otimes_\Z \Z[\zeta_p,1/p] \cong (\Z/p\Z)^2$, we have
$$
\uHom^\full_{\Z}((\Z/p\Z)^2,\mu_p \times \mu_p) \otimes_{\Z} \Z[\zeta_p,1/p]  \cong \GL_2(\F_p)_{/\Z[\zeta_p,1/p]}.
$$
\end{prop}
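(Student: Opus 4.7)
The plan is to exploit the fact that over $R := \Z[\zeta_p,1/p]$, the chosen isomorphism realizes $\mu_{p,R}$ as a disjoint union of copies of $\Spec R$ indexed by $\F_p$, via $x^p - 1 = \prod_{i \in \F_p} (x - \zeta_p^i)$ in $R[x]$. From this one sees that $\Spec(\cA) \otimes_\Z R = \uHom_R((\Z/p\Z)^2, (\mu_p\times\mu_p)_R)$ decomposes as $\coprod_{M \in M_2(\F_p)} \Spec R$, where the component indexed by $M = \ttmat{s}{t}{u}{v}$ corresponds to the $R$-point $(S,T,U,V) = (\zeta^s, \zeta^t, \zeta^u, \zeta^v)$ of $\cA$, writing $\zeta = \zeta_p$.

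The key computation will be that $\Phi_p(\zeta^k)$ equals $0$ if $k \not\equiv 0 \pmod p$ and equals $p \in R^\times$ if $k \equiv 0 \pmod p$. Applied to the generators of $\I$, this yields the following: on the $M$-component, $\Phi_p(S^a U^b)\Phi_p(T^a V^b)$ is a unit precisely when $as+bu \equiv 0$ and $at+bv \equiv 0 \pmod p$, i.e., precisely when $(a,b) \neq (0,0)$ witnesses a nontrivial linear relation among the rows of $M$; otherwise it is zero. An entirely analogous argument will show that $\Phi_p(S^a T^b)\Phi_p(U^a V^b)$ is a unit on the $M$-component exactly when $(a,b)$ witnesses a nontrivial linear relation among the columns of $M$, and is zero otherwise.

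Consequently, the $M$-component survives in $\Spec(\cA/\I) \otimes_\Z R$ if and only if no generator of $\I$ is a unit on it, equivalently if and only if the rows and columns of $M$ are both linearly independent over $\F_p$, equivalently if and only if $M \in \GL_2(\F_p)$. This gives the identification $\Spec(\cA/\I) \otimes_\Z R \cong \coprod_{M \in \GL_2(\F_p)} \Spec R = \GL_2(\F_p)_{/R}$ claimed in the statement. Since the argument reduces to a case analysis once the decomposition of $\mu_{p,R}$ is in place, there is no serious obstacle; the only point to watch is the correct matching of the two families of generators in $\I$ with the row and column linear independence conditions.
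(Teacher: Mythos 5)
Your proof is correct and follows the same route as the paper: identify $\uHom$ over $R=\Z[\zeta_p,1/p]$ with the matrix scheme $\underline{\End}((\Z/p\Z)^2)$ and observe that the primitivity conditions become row/column linear-independence, hence invertibility. The paper states this in one line (relying on the earlier remark that $(\mu_p\times\mu_p)^{\Prim}\otimes R$ is the nonzero locus); you spell out the underlying computation with $\Phi_p(\zeta^k)$ on each component $\Spec R$ indexed by $M\in M_2(\F_p)$, which is exactly the verification the paper leaves implicit.
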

\begin{proof}
Indeed, the image of $\uHom^\full_{\Z}((\Z/p\Z)^2,\mu_p \times \mu_p)\otimes_{\Z} \Z[\zeta_p,1/p] $ under
$$
\uHom_{\Z}((\Z/p\Z)^2,\mu_p \times \mu_p)\otimes_{\Z} \Z[\zeta_p,1/p] \cong \underline{\End}_{\Z}((\Z/p\Z)^2)
$$
will be the set of matrices whose rows and columns are linearly independent.
\end{proof}

\subsection{Group actions}
\label{subsec: group}
 In our proof of Theorem \ref{thm: main}, we will make use of some group actions on $\cA$. On $\uHom_{\Z}((\Z/p\Z)^2,\mu_p \times \mu_p) = \Spec(\cA)$, there is a right action of $\GL_2(\F_p) = \Aut((\Z/p\Z)^2)$, and a left action of $\GL_2(\F_p) = \Aut( \mu_p \times \mu_p)$, and these two actions commute. This implies that on $\cA$ there is a left action of $\GL_2(\F_p) = \Aut((\Z/p\Z)^2)$, and a right action of $\GL_2(\F_p) = \Aut( \mu_p \times \mu_p)$, and these two actions commute. 

There is an involution on $\cA$ that relates the two actions. Namely, the isomorphism of Cartier duality
$$
\uHom_{\Z}((\Z/p\Z)^2,\mu_p \times \mu_p) \simeq \uHom_{\Z}((\mu_p \times \mu_p)^\vee,((\Z/p\Z)^2)^\vee) \simeq \uHom_{\Z}((\Z/p\Z)^2,\mu_p \times \mu_p)
$$
(where $(-)^\vee$ denotes the Cartier dual) induces an involution $\iota$ on $\cA$. In coordinates, it fixes $S$ and $V$ and it switches $T$ and $U$ -- in other words, $\iota$ is the transpose operator. For $\gamma \in \GL_2(\F_p)$ and $f \in \cA$, we have $\iota(\gamma.f)=\iota(f).\gamma^t$, where $\gamma^t$ is the transpose of $\gamma$.

\section{Flatness}

In this section, we prove Theorem \ref{thm: main}. First notice that we are reduced to proving that the scheme  $\uHom^\full_{\Z}((\Z/p\Z)^2,\mu_p \times \mu_p) \otimes_\Z \Zp$ is flat over $\Zp$. Indeed, by Proposition \ref{prop: generic}, we see that $\uHom^\full_{\Z}((\Z/p\Z)^2,\mu_p \times \mu_p) \otimes_\Z \Z[1/p]$ is \'etale-locally isomorphic to the scheme $\GL_2(\F_p)_{/\Z[1/p]}$. 

Retaining the notation of the previous section, we are reduced to proving that $\cA_{(p)}/\I_{(p)}$ is flat over $\Zp$, or, equivalently, $p$-torsion free. By Proposition \ref{prop: generic}, we already know that 
$$
\dim_{\bar{\Q}}(\cA_{(p)}/\I_{(p)} \otimes_{\Zp} \bar{\Q})= \#\GL_2(\F_p) = (p^2-1)(p^2-p) = p^4-p^3-p^2+p.
$$ 
Then $\dim_{\F_p}(\cA_{(p)}/\I_{(p)} \otimes_{\Zp} \F_p) \ge p^4-p^3-p^2+p$ with equality if and only if $\cA_{(p)}/\I_{(p)}$ is $p$-torsion free. Let $A = \cA_{(p)} \otimes_{\Zp} \F_p$ and let $I$ be the image of $\I_{(p)}$ in $A$. Then we have have $\cA_{(p)}/\I_{(p)} \otimes_{\Zp} \F_p =A/I$, and $\dim_{\F_p}(A)=p^4$, and so we are reduced to showing $\dim_{\F_p}(I) = p^3+p^2-p$. We record this discussion as a lemma.

\begin{lem}
\label{lem: reduction to counting}
We have $\dim_{\F_p}(I) \le p^3+p^2-p$, with equality if and only if $\cA/\I$ is flat over $\Z$.
\end{lem}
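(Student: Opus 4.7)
The plan is to formalize the counting argument already sketched in the paragraph preceding the lemma; the content is essentially bookkeeping, so the proof will be short.

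First I would observe that $\cA$ is a free $\Z$-module of rank $p^4$, with basis $\{S^aT^bU^cV^d : (a,b,c,d) \in \F_p^4\}$, so $\cA_{(p)}/\I_{(p)}$ is a finitely generated $\Zp$-module. Since $\Zp$ is a principal ideal domain, the structure theorem gives a decomposition $\cA_{(p)}/\I_{(p)} \cong \Zp^{\,n} \oplus T$ where $T$ is a finite $p$-primary torsion module. Tensoring with $\bar{\Q}$ kills $T$ and yields dimension $n$, while tensoring with $\F_p$ yields dimension $n + \dim_{\F_p}(T)$. Hence
\[
\dim_{\F_p}\bigl(\cA_{(p)}/\I_{(p)} \otimes_{\Zp} \F_p\bigr) \;\ge\; \dim_{\bar\Q}\bigl(\cA_{(p)}/\I_{(p)} \otimes_{\Zp} \bar\Q\bigr),
\]
with equality if and only if $T = 0$, i.e.\ if and only if $\cA_{(p)}/\I_{(p)}$ is $p$-torsion-free, i.e.\ if and only if it is flat over $\Zp$.

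Next I would compute the right-hand side using Proposition \ref{prop: generic}: after base change to $\Z[\zeta_p, 1/p]$ the scheme becomes étale-locally isomorphic to $\GL_2(\F_p)$, so its $\bar\Q$-dimension equals $\#\GL_2(\F_p) = (p^2-1)(p^2-p) = p^4 - p^3 - p^2 + p$. Combined with $\dim_{\F_p}(A) = p^4$ and the short exact sequence $0 \to I \to A \to A/I \to 0$ of $\F_p$-vector spaces, this rearranges to
\[
\dim_{\F_p}(I) \;=\; p^4 - \dim_{\F_p}(A/I) \;\le\; p^4 - (p^4 - p^3 - p^2 + p) \;=\; p^3 + p^2 - p,
\]
with equality exactly when $\cA_{(p)}/\I_{(p)}$ is flat over $\Zp$.

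Finally I would promote flatness over $\Zp$ to flatness over $\Z$. Flatness is a local property on the base, so it suffices to check it at every prime of $\Z$. For primes $\ell \ne p$, Proposition \ref{prop: generic} (together with the fact that flatness descends under faithfully flat base change, and that $\GL_2(\F_p)_{/\Z[1/p]}$ is flat) gives flatness away from $p$; at $p$ we just established the equivalence with the counting condition. I do not expect any obstacle here, since the lemma is purely a reformulation — the genuine work, postponed to the subsequent sections, is the actual combinatorial computation that $\dim_{\F_p}(I) = p^3 + p^2 - p$.
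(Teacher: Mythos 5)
Your proof is correct and follows essentially the same route as the paper: reduce to $\Zp$ via Proposition \ref{prop: generic} and faithfully flat descent, translate flatness over $\Zp$ into $p$-torsion-freeness of the finitely generated $\Zp$-module $\cA_{(p)}/\I_{(p)}$, compare special and generic fiber dimensions, and pass from $A/I$ to $I$ using $\dim_{\F_p} A = p^4$. (One small notational point: "tensoring with $\F_p$ yields dimension $n + \dim_{\F_p}(T)$" should read $n + \dim_{\F_p}(T\otimes_{\Zp}\F_p)$, but the equality-iff-$T=0$ conclusion you draw is unaffected.)
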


\subsection{Column and row generators} To find a lower bound for the dimension of $I$, we will choose nice generators for $I$ and show that there are small intersections between the ideals generated by subsets of these generators. We let $s,t,u,v$ denote the images in $A$ of $S-1,T-1,U-1,V-1 \in \cA$. So we have
$$
A = \F_p[s,t,u,v]/(s^p,t^p,u^p,v^p).
$$

We single out certain elements of $I$ that we will call the \emph{column generators}, that are the images of some of the generators $\Phi_p(S^aT^b)\Phi_p(U^aV^b)$ of $\I$. For $(a,b) \in (\Z/p\Z)^2 \setminus {0}$, we let $\phi_{(a,b)}: \Z/p\Z \to (\Z/p\Z)^2$ denote the homomorphism $1 \mapsto (a,b)$. The element $\Phi_p(S^aT^b)\Phi_p(U^aV^b)$ of $\I$ is cutting out the condition that the element of $\mu_p \times \mu_p$ determined by the composition
$$
\Z/p\Z \xrightarrow{\phi_{(a,b)}} (\Z/p\Z)^2 \xrightarrow{h} \mu_p \times \mu_p
$$
is primitive. Since, by Lemma \ref{lem: prim is stable}, $(\mu_p \times \mu_p)^\Prim \subset \mu_p \times \mu_p$ is stable under the action of $\Aut(\mu_p \times \mu_p)$, we see that the ideal generated by $\Phi_p(S^aT^b)\Phi_p(U^aV^b)$ in $\cA$ is stable under the action of $\Aut(\mu_p \times \mu_p)$. In particular, the ideal generated by $\Phi_p(S^aT^b)\Phi_p(U^aV^b)$ in $\cA$ will be the same if we replace $(a,b)$ by $(\lambda a, \lambda b)$ for any $\lambda \in (\Z/p\Z)^\times$. 

This discussion implies that the ideals $\Phi_p(S^aT^b)\Phi_p(U^aV^b)\A$ can be labeled by classes of $(a,b)$ in $\bP^1(\Z/p\Z)$. We choose the set $\{(1,0),(1,1),\dots, (1,p-1),(0,1)\}$ of representatives for $\bP^1(\Z/p\Z)$, and consider the resulting elements of $I$. That is, for $i=0,1, \dots, p-1$, we let
$$
c_i = \Phi_p((s+1)(t+1)^i)\Phi_p((u+1)(v+1)^i).
$$
Note that, since $\Phi_p(X+1) \equiv X^{p-1} \mod p$, we have $c_0 =(su)^{p-1}$. We also let
$$
c=c_p = \Phi_p(t+1)\Phi_p(v+1)=(tv)^{p-1}.
$$
We call the set $\cC= \{c_0A, c_1A, \dots, c_pA\}$ the set of the column generators. For $J \subset \{0, \dots,p\}$, we let $C(J)=\sum_{i \in J} c_i A$. For $J=\{0, \dots,p\}$, we let $C=C(J)$.

Notice that the elements $c_0$ and $c$ have particularly simple form. The crucial argument in the proof of Theorem \ref{thm: main} will rely on this simple form. In order to reduce to considering only these two simple elements, we will make frequent use of the actions of $\GL_2(\F_p)$ on $A$. We record some nice properties of the column generators under the actions of $\GL_2(\F_p)$ on $A$.

\begin{lem}\label{lem: gl2 on columns} Consider the actions of $\GL_2(\F_p)$ on $A$ as in Section \ref{subsec: group}. 
\begin{list}{}{}
\item[$(1)$] For $i\in \{0, \dots,p\}$, the group $\Aut(\mu_p \times \mu_p)$ stabilizes $c_i A$.
\item[$(2)$] The group $\Aut((\Z/p\Z)^2)$ acts triply transitively on the set $\cC$.
\item[$(3)$] The lower-triangular unipotent subgroup of  $\Aut((\Z/p\Z)^2)$ acts transitively on the set $\{c_0, \dots, c_{p-1}\}$ and trivially on the ideal $cA$.
\end{list}
\end{lem}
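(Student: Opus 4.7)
The plan is to treat (1), (2), (3) in turn, in each case exploiting the parametrization of column generators by $\bP^1(\F_p)$ that was set up just before the lemma: for $i\in\{0,\dots,p-1\}$ the ideal $c_iA$ is the mod-$p$ image of the ideal in $\cA$ generated by $\Phi_p(ST^i)\Phi_p(UV^i)$, corresponding to the class $(1:i)\in\bP^1(\F_p)$, while $c_pA=cA$ corresponds to $(0:1)$.

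For (1), I would invoke the observation already made in the discussion preceding the column generators: the ideal $\Phi_p(S^aT^b)\Phi_p(U^aV^b)\cA$ in $\cA$ cuts out the preimage of $(\mu_p\times\mu_p)^\Prim$ under the ``evaluation at $\phi_{(a,b)}$'' morphism $h\mapsto h\circ\phi_{(a,b)}$, which is $\Aut(\mu_p\times\mu_p)$-equivariant for post-composition on both sides; stability of $(\mu_p\times\mu_p)^\Prim$ coming from Lemma \ref{lem: prim is stable} then gives stability of this ideal in $\cA$, and reducing mod $p$ gives stability of $c_iA$ in $A$. For (2), the $\Aut((\Z/p\Z)^2)$-action on $\uHom_\Z((\Z/p\Z)^2,\mu_p\times\mu_p)$ by pre-composition of $h$ descends, via the identification of $\cC$ with $\bP^1(\F_p)$, to the standard $\GL_2(\F_p)$-action on $\bP^1(\F_p)$; since this factors through $\mathrm{PGL}_2(\F_p)$, which is sharply 3-transitive on the projective line of any field, 3-transitivity on $\cC$ follows.

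For (3), I would compute directly from the formulas. With the conventions of Section \ref{subsec: group} arranged so that $\gamma=\ttmat{1}{0}{x}{1}\in\Aut((\Z/p\Z)^2)$ acts on $A$ by $S\mapsto ST^x$, $U\mapsto UV^x$, fixing $T$ and $V$, the characteristic-$p$ identity $\Phi_p(Z)=(Z-1)^{p-1}$ lets me rewrite $c_i=((s+1)(t+1)^i-1)^{p-1}((u+1)(v+1)^i-1)^{p-1}$. Applying $\gamma$ yields the same expression with $i$ replaced by $i+x$; since $(t+1)^p=1=(v+1)^p$ in $A$ the exponent may be reduced modulo $p$, giving $\gamma(c_i)=c_{(i+x)\bmod p}$, which realizes the transitive translation action on $\{c_0,\dots,c_{p-1}\}$. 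On the other hand $c=(tv)^{p-1}$ involves only the fixed variables $t$ and $v$, so $\gamma(c)=c$, whence the ideal $cA$ is preserved.

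The only step demanding real care is pinning down the left/right conventions in (3) so that the translation is by $+x$ rather than by $-x$ or a more twisted prescription; beyond that, the three parts are short bookkeeping exercises given the $\bP^1$-parametrization.
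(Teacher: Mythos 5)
Parts (1) and (2) match the paper's proof essentially verbatim, and the first half of (3) — the translation action $c_i\mapsto c_{i+x}$ via $\Phi_p(Z+1)=Z^{p-1}$ — is also the same calculation.

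There is a genuine gap in your treatment of the second half of (3). The lemma asserts that the lower-triangular unipotent acts \emph{trivially} on the ideal $cA$, i.e.\ fixes every element of $cA$, and this is the form in which it is used later (to conclude that the subspace $c\m_{(s,u)}^{2p-2}\subset cA$ is fixed). Your argument only establishes that $\gamma(c)=c$ and hence that the ideal $cA$ is mapped to itself as a set; it does not show that $\gamma(ac)=ac$ for arbitrary $a\in A$. Indeed $\gamma(ac)=\gamma(a)c$, and equality with $ac$ requires $\gamma(a)-a\in\Ann_A(c)=tA+vA$, which is a nontrivial extra fact. The paper fills this in by observing that $a\mapsto ac$ induces a $\gamma$-equivariant isomorphism $A/\Ann_A(c)\isoto cA$, that $\Ann_A(c)=tA+vA$ is $\gamma$-stable, and that $\gamma$ acts trivially on $A/(tA+vA)$ because $t,v$ become zero there and $\gamma$ fixes the images of $s$ and $u$ modulo $(t,v)$. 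Equivalently, one can verify directly that $\gamma(s)-s=t(1+s)\in tA$ and $\gamma(u)-u=v(1+u)\in vA$, so $\gamma(a)\equiv a\pmod{tA+vA}$ for every $a$. Either completion closes the gap.
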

%\begin{rem}
%Recall that a group $G$ is said to act triply transitively on a set $X$ if, for every pair of triples  $(x_1,x_2,x_3),(y_1,y_2,y_3) \in X^3$ with the $x_i$'s and $y_i$'s respectively pairwise distinct, there exists $g \in G$ such that $g(x_1,x_2,x_3)=(y_1,y_2,y_3)$.
%\end{rem}
\begin{proof}
Statement (1) follows from Lemma \ref{lem: prim is stable}. Statement (2) follows from the fact that $\GL_2$ acts triply transitively on $\bP^1$. 

To see statement (3), let $\gamma = \ttmat{1}{0}{1}{1} \in \Aut((\Z/p\Z)^2)$. Then $\gamma$ acts on $\A$ by
\[\tag{*}
S \mapsto ST, \  T \mapsto T, U \mapsto UV, \  V \mapsto V.
\]
Then it is clear that $c_i. \gamma= c_{i+1}$. This proves the first part of (3). To see that $\gamma$ acts trivially on $cA$, we note that there is a $\Aut((\Z/p\Z)^2)$-equivariant isomorphism
$$
A/\Ann_A(c) \isoto cA
$$
given by $a \mapsto ac$. From the formula $c=(tv)^{p-1}$ we see that $\Ann_A(c)=tA+vA$. It is then clear from (*) that $\Ann_A(c)$ is stable under the action of $\gamma$, and so $\gamma$ acts on $A/\Ann_A(c)$. But $T=V=1$ in $A/\Ann_A(c)$, so we see from (*) that $\gamma$ acts trivially on $A/\Ann_A(c)$, and hence on $cA$.
\end{proof}

We also have \emph{row generators} coming from the generators $\Phi_p(S^aU^b)\Phi_p(T^aV^b)$ of $\I$. Since these elements are obtained from $\Phi_p(S^aT^b)\Phi_p(U^aV^b)$ by interchanging $T$ and $U$, we define $r_i = \iota(c_i)$ for $i=0,\dots, p$, where $\iota$ is the involution on $\A$ defined in Section \ref{subsec: group}. We also define $r=r_p$, and $\cR=\{r_0A, \dots, r_pA\}$. For $J \subset \{0, \dots,p\}$, we let $R(J)=\sum_{i \in J} r_iA$. For $J=\{0, \dots,p\}$, we let $R=R(J)$.

The following lemma is an immediate consequence of Lemma \ref{lem: gl2 on columns} and the relation of the two $GL_2(\F_p)$ actions discussed in Section \ref{subsec: group}.

\begin{lem}\label{lem: gl2 on rows} Consider the actions of $\GL_2(\F_p)$ on $A$ as in Section \ref{subsec: group}. 
\begin{list}{}{}
\item[$(1)$] For $i\in \{0, \dots,p\}$, the group $\Aut((\Z/p\Z)^2)$ stabilizes $r_i A$.
\item[$(2)$] The group $\Aut(\mu_p \times \mu_p)$ acts triply transitively on the set $\cR$.
\item[$(3)$] The upper-triangular unipotent subgroup of $\Aut(\mu_p \times \mu_p)$ acts transitively on the set $\{r_0, \dots, r_{p-1}\}$ and trivially on the ideal $rA$.
\end{list}
\end{lem}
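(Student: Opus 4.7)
The plan is to derive each statement by transporting the corresponding statement of Lemma \ref{lem: gl2 on columns} through the Cartier-duality involution $\iota$ of Section \ref{subsec: group}, using the intertwining identity $\iota(\gamma.f) = \iota(f).\gamma^t$. This identity says that $\iota$ exchanges the left action of $\Aut((\Z/p\Z)^2)$ with the right action of $\Aut(\mu_p \times \mu_p)$ on $A$, with group elements matched under the transposition anti-automorphism of $\GL_2(\F_p)$. Since transposition is an involutive bijection of $\GL_2(\F_p)$ and carries subgroups to subgroups, both triple transitivity and transitivity of a subgroup on a set transfer directly through $\iota$. Moreover, because $\iota$ is a ring automorphism of $A$ and $r_i := \iota(c_i)$, we have the basic dictionary $r_i A = \iota(c_i A)$ and $rA = \iota(cA)$.

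With this setup, the three parts are routine. For (1), take any $\delta \in \Aut((\Z/p\Z)^2)$; the intertwining identity gives $\iota(\delta.r_i) = c_i.\delta^t$, and by Lemma \ref{lem: gl2 on columns}(1) this element lies in $c_i A$, hence $\delta.r_i \in \iota(c_i A) = r_i A$, so $\delta$ stabilizes $r_i A$. For (2), the triply transitive left action of $\Aut((\Z/p\Z)^2)$ on $\cC$ transfers, via $\iota$ and transpose, to a triply transitive right action of $\Aut(\mu_p \times \mu_p)$ on $\iota(\cC) = \cR$. For (3), one uses that the transpose of the lower-triangular unipotent subgroup of $\GL_2(\F_p)$ is the upper-triangular unipotent subgroup, and then Lemma \ref{lem: gl2 on columns}(3) combined with $\iota(cA) = rA$ yields the transitive action on $\{r_0,\dots,r_{p-1}\}$ and the trivial action on $rA$.

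The proof is essentially bookkeeping; the only point that requires care is that transposition is an anti-automorphism, not an automorphism, of $\GL_2(\F_p)$, so one should track which side each action appears on when converting left actions to right actions. Since the statements to be transferred all concern stability under a subgroup or transitivity on a set, rather than a specific group-theoretic composition of actions, this subtlety causes no real obstacle, which is why the author describes the lemma as an immediate consequence of Lemma \ref{lem: gl2 on columns}.
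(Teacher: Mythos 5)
Your proof is correct and follows exactly the route the paper intends: the paper gives no detailed argument, simply stating that the lemma "is an immediate consequence of Lemma \ref{lem: gl2 on columns} and the relation of the two $\GL_2(\F_p)$ actions," i.e.\ the intertwining identity $\iota(\gamma.f)=\iota(f).\gamma^t$. You have merely spelled out the bookkeeping (the dictionary $r_iA=\iota(c_iA)$, $rA=\iota(cA)$, and that transposition swaps the two unipotent subgroups) that the paper leaves implicit.
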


\subsection{The proof of flatness} We have seen in Lemma \ref{lem: reduction to counting} that the proof of Theorem \ref{thm: main} is reduced to counting the dimension of $I$. In fact, we do more: we count the dimensions of ideals generated by just some of the column and row generators. The main result is a set of formulas for the dimension of ideals generated by column and row elements. In the statement of the theorem, we use the binomial coefficients
\[
{n \choose k} = \prod_{i=1}^{k} \frac{n+1-i}{i},
\]
where $n$ and $k$ are integers and $k \ge 1$.
\begin{thm}
\label{thm:dim formulas}
 Let $J \subset \{0, \dots, p+1\}$ and let $k=\# J$. The dimension of the vector spaces $C(J)$, $R(J)$,  $C+R(J)$, and $C(J)+R$ depend only on $k$ and not on $J$. We have the explicit formulas:
\begin{list}{}{}
\item[$(1)$] $\displaystyle \dim_{\F_p}(C(J))= \dim_{\F_p}(R(J))=kp^2 - {k+1 \choose 3}$
\item[$(2)$] $\displaystyle \dim_{\F_p}(C+R(J))=\dim_{\F_p}(C(J)+R)=p^3+p^2-p - {p-k+2 \choose 3}$
\end{list}
\end{thm}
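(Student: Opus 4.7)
The plan is to prove all four formulas by induction on $k = \#J$. Since the involution $\iota$ of Section 2.3 swaps $c_i \leftrightarrow r_i$, the claims for $R(J)$ and $C(J)+R$ follow from those for $C(J)$ and $C+R(J)$, so it suffices to establish
\[
\dim_{\F_p} C(J) = kp^2 - \binom{k+1}{3} \quad \text{and} \quad \dim_{\F_p}(C+R(J)) = p^3+p^2-p - \binom{p-k+2}{3}.
\]

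The structural setup decomposes $A = B_1 \otimes_{\F_p} B_2$ with $B_1 = \F_p[s,t]/(s^p,t^p)$ and $B_2 = \F_p[u,v]/(u^p,v^p)$. The identity $\Phi_p(1+z) = z^{p-1}$ in characteristic $p$ gives $c_i = \alpha_i^{p-1} \otimes \beta_i^{p-1}$, where $\alpha_i = (s+1)(t+1)^i - 1 \in B_1$ and $\beta_i = (u+1)(v+1)^i - 1 \in B_2$ (with the convention $\alpha_p = t$, $\beta_p = v$); thus $c_i A = V_i \otimes V_i'$ for $p$-dimensional ideals $V_i \subset B_1$ and $V_i' \subset B_2$. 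Because $\Aut((\Z/p\Z)^2)$ preserves this factorization and acts triply transitively on $\cC$ (Lemma \ref{lem: gl2 on columns}), the concrete identification $V_0 \cap V_p = \F_p \cdot (st)^{p-1}$ (the socle of $B_1$) yields $V_i \cap V_j = \F_p \cdot (st)^{p-1}$ for all $i \ne j$. A consequence is $\dim \sum_{i \in J'} V_i = k'p - \binom{k'}{2}$ in $B_1$ for $\#J' = k' \le p+1$, and hence $\dim(V_{i_0} \cap \sum_{j \in J} V_j) = k$ whenever $i_0 \notin J$ and $\#J = k \le p$. The rows admit an analogous decomposition relative to the alternate tensor structure $A = \F_p[s,u]/(s^p,u^p) \otimes \F_p[t,v]/(t^p,v^p)$.

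For the inductive step of (1), it suffices to show $\dim(c_{i_0}A \cap C(J)) = \binom{k+1}{2}$ whenever $\#J = k$ and $i_0 \notin J$. A natural lower bound of $2k-1$ comes from ``socle-type'' elements: for each $a$ in the $k$-dimensional subspace $V_{i_0} \cap \sum_{j \in J} V_j$, the element $a \otimes (uv)^{p-1}$ lies in $c_{i_0} A \cap C(J)$ (via a decomposition $a = \sum a_j$ with $a_j \in V_j$ combined with the socle on the second factor), and symmetrically on the other side, with the common overlap $(st)^{p-1} \otimes (uv)^{p-1}$ subtracted. The remaining $\binom{k-1}{2}$ intersection elements must arise from subtler constructions, where neither tensor factor lies in the socle and off-diagonal terms cancel.

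The main obstacle is constructing these $\binom{k-1}{2}$ non-socle intersection elements and matching the upper bound, for which I would dualize using the Gorenstein pairing on $A$ and count annihilators. Part (2) then follows by a parallel induction adding row generators to the full column ideal $C$; the appearance of $\binom{p-k+2}{3}$ rather than $\binom{k+1}{3}$ suggests a dual count, tracking dimensions missing from $I = C+R$ (whose dimension $p^3+p^2-p$ is known by Lemma \ref{lem: reduction to counting}), and the interplay between the two distinct tensor-product decompositions of $A$ that underlie columns and rows will make the analysis technically demanding.
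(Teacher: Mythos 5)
Your proposal sets up a clean framework — the tensor decomposition $A = B_1 \otimes B_2$, the identification $c_iA = V_i \otimes V_i'$ with $V_i$ the principal ideal generated by $\alpha_i^{p-1}$ in $B_1$, and the observation that pairwise intersections $V_i \cap V_j$ are the one-dimensional socle — and these observations are correct and pleasantly geometric. But the argument has a genuine gap at its center, and you acknowledge it yourself: you produce $2k-1$ ``socle-type'' elements of $c_{i_0}A \cap C(J)$ and then write that the remaining $\binom{k-1}{2}$ elements ``must arise from subtler constructions'' and that ``the main obstacle is constructing these non-socle intersection elements and matching the upper bound.'' That obstacle is precisely the hard part of the theorem, and appealing to ``the Gorenstein pairing'' as a way to find them is a plan, not a proof. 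Part (2) is in even worse shape: you note only that the appearance of $\binom{p-k+2}{3}$ ``suggests a dual count'' and that the interplay of the two tensor decompositions will be ``technically demanding,'' which again is a statement of difficulty rather than a resolution of it. (Even the claim $\dim\sum_{i\in J'}V_i = k'p - \binom{k'}{2}$, which you present as a ``consequence'' of pairwise intersections being the socle, does not follow from that fact alone for general subspaces and needs an argument using the graded structure of $B_1$.)

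There is also a strategic mismatch with what the theorem actually requires. You are trying to compute $\dim\bigl(c_{i_0}A\cap C(J)\bigr)$ exactly at each inductive step, which means proving matching upper and lower bounds on each intersection. The paper avoids this doubling of difficulty: Proposition~\ref{prop: intersections} proves only an \emph{upper} bound $C(J)\cap cA \subset c\,\m_{(s,u)}^{2p-k-1}$ (and its row/column analogue), by an induction whose engine is the Key Lemma, which exploits the concrete fact $\m_{(s,u)}\subset \Ann_A(c_0A)$ so that multiplying an element of $(\fI+c_0A)\cap cA$ by anything in $\m_{(s,u)}$ lands back in $\fI\cap cA$. This yields \emph{lower} bounds on the graded pieces of the filtration $F_\bullet$ on $C+R$, which sum to at least $p^3+p^2-p$; combined with the a priori cap $\dim_{\F_p}I \le p^3+p^2-p$ from Lemma~\ref{lem: reduction to counting}, every inequality is forced to be an equality simultaneously, and the individual dimension formulas drop out by truncating the filtration. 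Your tensor-product picture could plausibly be used to reprove the intersection \emph{upper} bounds, but as written the proposal does not furnish them, and without the ``squeeze against the generic-fiber count'' step you would genuinely need the two-sided bounds that you have not established.
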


Before proceeding with the proof of this theorem, we record some corollaries.
\begin{cor}
\label{cor: main refined}
For any subset $J \subset \{0, \dots, p+1\}$ with $\# J= p$, we have $C+R(J)=C(J)+R=I$, and
$$
\dim_{\F_p} (I)= p^3+p^2-p.
$$
\end{cor}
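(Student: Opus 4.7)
The plan is essentially bookkeeping: Theorem~\ref{thm:dim formulas} does all of the hard work, and the corollary is the numerical payoff of specializing to the boundary case $k = p$.

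First, I would apply the formula in Theorem~\ref{thm:dim formulas}(2) with $k = \#J = p$ to obtain
$$\dim_{\F_p}(C + R(J)) \;=\; p^3 + p^2 - p - \binom{p - p + 2}{3} \;=\; p^3 + p^2 - p - \binom{2}{3}.$$
The key arithmetic observation is that under the generalized binomial convention stated just before the theorem, $\binom{n}{k} = \prod_{i=1}^{k}(n+1-i)/i$, the product $\binom{2}{3} = (2 \cdot 1 \cdot 0)/(1 \cdot 2 \cdot 3)$ contains a zero factor and therefore vanishes. Hence $\dim_{\F_p}(C + R(J)) = p^3 + p^2 - p$, and symmetrically $\dim_{\F_p}(C(J) + R) = p^3 + p^2 - p$.

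Next, I would sandwich these dimensions against $\dim_{\F_p}(I)$. On the one hand, every column generator $c_i$ and every row generator $r_j$ lies in $I$, so the trivial inclusions $C + R(J) \subset I$ and $C(J) + R \subset I$ hold. On the other hand, Lemma~\ref{lem: reduction to counting} gives the upper bound $\dim_{\F_p}(I) \le p^3 + p^2 - p$. Combining these,
$$p^3 + p^2 - p \;=\; \dim_{\F_p}(C + R(J)) \;\le\; \dim_{\F_p}(I) \;\le\; p^3 + p^2 - p,$$
which forces equality throughout: $\dim_{\F_p}(I) = p^3 + p^2 - p$, $C + R(J) = I$, and similarly $C(J) + R = I$.

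There is no genuine obstacle here, since the corollary follows formally once Theorem~\ref{thm:dim formulas} is established. The only subtlety worth flagging explicitly in the write-up is the interpretation of $\binom{2}{3}$ via the extended product formula, since under the usual convention this symbol would be undefined or only conventionally set to zero; it is precisely this vanishing that makes the two generating sets $C + R(J)$ and $C(J) + R$ fill out all of $I$.
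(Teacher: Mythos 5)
Your proof is correct and follows exactly the same route as the paper: plug $k=p$ into Theorem~\ref{thm:dim formulas}(2), note $C+R(J), C(J)+R \subset I$, and combine with the upper bound from Lemma~\ref{lem: reduction to counting}. The only difference is that you spell out the vanishing $\binom{2}{3}=0$ under the paper's generalized binomial convention, which the paper leaves implicit.
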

\begin{proof}
We have $\dim_{\F_p}(C+R(J))=\dim_{\F_p}(C(J)+R)=p^3+p^2-p$ by the theorem. But $C+R(J), C(J)+R \subset I$ by definition, and $\dim_{\F_p} (I) \le  p^3+p^2-p$ by Lemma \ref{lem: reduction to counting}, so the corollary follows.
\end{proof}
By Lemma \ref{lem: reduction to counting}, this corollary proves Theorem \ref{thm: main}. The following is also now immediate.
\begin{cor}
If $J \subset \{0, \dots, p+1\}$ with $\# J< p$, then $C+R(J) \subsetneq I$ and $C(J)+R \subsetneq I$. In particular, $C \ne I$ and $R \ne I$.
\end{cor}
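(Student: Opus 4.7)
My plan is to read off the corollary as a direct numerical consequence of Theorem \ref{thm:dim formulas}(2) combined with the value $\dim_{\F_p}(I) = p^3 + p^2 - p$ established in Corollary \ref{cor: main refined}. No new construction is required; the explicit dimension formulas were precisely set up to detect when the ideals $C + R(J)$ and $C(J) + R$ fall short of $I$, and this corollary is just the contrapositive of the equality case.

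Concretely, Theorem \ref{thm:dim formulas}(2) asserts
$$
\dim_{\F_p}(C+R(J)) = \dim_{\F_p}(C(J)+R) = p^3+p^2-p - {p-k+2 \choose 3},
$$
where $k = \#J$. Under the hypothesis $k < p$ one has $p - k + 2 \ge 3$, so the binomial coefficient ${p-k+2 \choose 3}$ is a strictly positive integer. Therefore both dimensions are strictly less than $\dim_{\F_p}(I) = p^3 + p^2 - p$, which forces the strict containments $C + R(J) \subsetneq I$ and $C(J) + R \subsetneq I$ at once.

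For the ``in particular'' clause, I would simply specialize to $J = \emptyset$, which has $k = 0 < p$. Then $C = C + R(\emptyset)$ and $R = C(\emptyset) + R$, so the strict inclusions just proved yield $C \ne I$ and $R \ne I$. There is really no obstacle to overcome: the entire argument consists of plugging the hypothesis $k < p$ into a formula that has already been proved. The only thing worth checking is that the range of $J$ in Theorem \ref{thm:dim formulas} includes the boundary cases $J = \emptyset$ and $\#J = p-1$ used here, which is exactly the scope of its statement.
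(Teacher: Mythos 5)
Your argument is exactly what the paper leaves implicit when it records this corollary as ``immediate'' after Theorem \ref{thm:dim formulas} and Corollary \ref{cor: main refined}: positivity of ${p-k+2 \choose 3}$ for $\#J < p$ forces a strict drop in dimension, hence a strict containment in $I$. So the approach is the intended one and the main deduction is correct.

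One point to watch concerning the ``in particular'' clause: you apply formula (2) of Theorem \ref{thm:dim formulas} at $k=0$, but that formula is actually only consistent for $k\ge 1$. At $k=0$ it reads $p^3+p^2-p-{p+2 \choose 3}$, whereas $C+R(\emptyset)=C$ has dimension $(p+1)p^2-{p+2 \choose 3}$ by formula (1) at $k=p+1$ (or by summing $\dim_{\F_p}\gr_i^F$ for $i=1,\dots,p+1$); these two values differ by $p$. The source of the discrepancy is the special case at $\gr_{p+2}^F$, which contributes ${p \choose 2}$ rather than ${p+1 \choose 2}$, so formula (2) picks up an offset of $p$ that is only valid once at least one $r_i$ has been added. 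This does not break your conclusion -- either value is strictly less than $p^3+p^2-p$ since ${p+2 \choose 3}>p$ for $p\ge 2$ -- but the clean route to $C\ne I$ and $R\ne I$ is to cite formula (1) at $k=p+1$ for $\dim_{\F_p}(C)=\dim_{\F_p}(R)$, or to note $C=C+R(\emptyset)\subset C+R(J')$ for any $J'$ with $\#J'=p-1$ and reduce to the $k=p-1$ case, rather than invoking formula (2) at $k=0$.
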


The proof of Theorem \ref{thm:dim formulas} relies on the following proposition.

\begin{prop}
\label{prop:dimension inequalities}
Let $J \subset \{0, \dots p+1\}$ be a proper subset, and let $k = \# J+1$. Let $i \in \{0, \dots p+1\}$ be some element not in $J$. Then 
\begin{list}{}{}
\item[$(1)$] $\displaystyle \dim_{\F_p}\left(\frac{C(J)+c_iA}{C(J)}\right) \ge p^2 - {k \choose 2}$.
\item[$(2)$] $\displaystyle \dim_{\F_p}\left(\frac{C+R(J)+r_iA}{C+R(J)}\right) \ge \left \{ \begin{array}{lr}
\displaystyle {p \choose 2} &  \mathrm{if } \  k=1 \\
\displaystyle {p-k+2 \choose 2} &  \mathrm{if } \  k > 1	
\end{array}. \right.
$
\end{list}
\end{prop}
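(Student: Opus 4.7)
My plan is to use the $\GL_2(\F_p)$-actions from Section~\ref{subsec: group} to reduce each inequality to a concrete computation inside an explicit polynomial-ring quotient, and then to bound the relevant intersection by a monomial count.

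For part~(1), the triple transitivity of $\Aut((\Z/p\Z)^2)$ on $\cC$ (Lemma~\ref{lem: gl2 on columns}(2)) lets me assume $i=p$, so that $c_i=c=(tv)^{p-1}$. Since $\Ann_A(c)=tA+vA$, multiplication by $c$ identifies $cA$ with $\F_p[s,u]/(s^p,u^p)$, and the inequality becomes the upper bound
\[
\dim_{\F_p}\bigl(cA \cap C(J)\bigr) \le \binom{k}{2}.
\]
To compute this intersection I use the structural description $c_j=\alpha_j^{p-1}\beta_j^{p-1}$ with $\alpha_j=(1+s)(1+t)^j-1$ and $\beta_j=(1+u)(1+v)^j-1$: the substitution $s\mapsto\alpha_j$, $t\mapsto t$ is a ring automorphism of $\F_p[s,t]/(s^p,t^p)$, so $c_j A$ has a distinguished basis $\{c_j\,t^a v^b : 0\le a,b\le p-1\}$. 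An element of $c_jA$ lies in $cA$ precisely when its $(t,v)$-expansion is supported only in top degree $t^{p-1}v^{p-1}$, a linear condition whose solution space (for a single $j$) is one-dimensional, as already seen by direct computation for $j=0$ and $j=p$. Combining across $j\in J$ and using the Vandermonde-type independence of the polynomials $(1+t)^j$ for distinct $j$ should yield the bound $\binom{k}{2}$.

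For part~(2), an analogous reduction via the involution $\iota$ and Lemma~\ref{lem: gl2 on rows}(2) reduces to $r_i=r=(uv)^{p-1}$, with $rA\cong\F_p[s,t]/(s^p,t^p)$. The new feature is that one must simultaneously control $C\cap rA$ and $R(J)\cap rA$: a direct computation (analogous to the $cA\cap c_0A$ calculation) shows that $rA\cap c_jA$ is $p$-dimensional for each column index $j$, while by part~(1) applied in the row setting $rA\cap R(J)$ has dimension at most $\binom{k-1}{2}$. Careful bookkeeping of the two contributions inside $rA$, which has dimension $p^2$, gives a lower bound of $\binom{p-k+2}{2}$ on the codimension, with the degenerate case $k=1$ giving $\binom{p}{2}$ (handled directly since no row generators are present).

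The main obstacle is the monomial count in part~(1): showing that the essentially one-dimensional intersections $c_jA\cap cA$ for $j\in J$ combine to a subspace of $cA$ of dimension at most $\binom{k}{2}$ requires careful tracking of higher-order overlaps, rather than a naive sum of pairwise intersections. I would expect to proceed by an $(s,u)$-bidegree filtration on $cA\cong\F_p[s,u]/(s^p,u^p)$ together with induction on $|J|$, again appealing to $\GL_2(\F_p)$-transitivity to normalize the first several elements of $J$. Once part~(1) is established, part~(2) should follow by combining it with its row analog under $\iota$ and carefully analyzing the mutual position of $C$ and $R(J)$ inside $rA$.
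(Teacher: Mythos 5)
Your reduction is sound as far as it goes: using the $\GL_2(\F_p)$-transitivity to normalize $c_i$ to $c$, identifying $cA$ with $B=\F_p[s,u]/(s^p,u^p)$ via multiplication by $c$, and translating the desired codimension inequality into the upper bound $\dim_{\F_p}\bigl(C(J)\cap cA\bigr)\le\binom{k}{2}$ is exactly how the paper begins its proof of part (1) as well. The observation that each individual $c_jA\cap cA$ is one-dimensional (the socle $(stuv)^{p-1}\F_p$) is also correct and matches the paper's base case.

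However, there is a genuine gap at the step you yourself flag as ``the main obstacle.'' The quantity you need to control is $\bigl(\sum_{j\in J}c_jA\bigr)\cap cA$, which is much larger than $\sum_{j\in J}(c_jA\cap cA)$ precisely because of the cross terms you mention, and your proposed ``bidegree filtration plus Vandermonde independence'' does not furnish a mechanism for bounding those cross terms. The paper's mechanism is a Key Lemma that is entirely absent from your sketch: if $\fI\cap cA\subset c\m_{(s,u)}^m$ then $(\fI+c_0A)\cap cA\subset c\m_{(s,u)}^{m-1}$. Its proof rests on two facts you have not isolated: (a) $\m_{(s,u)}\cdot c_0A=0$, because $c_0=(su)^{p-1}$; so for $f=g+h$ with $g\in\fI$, $h\in c_0A$, and $f\in cA$, multiplying by any $z\in\m_{(s,u)}$ gives $zf=zg\in\fI\cap cA\subset c\m_{(s,u)}^m$, and (b) the elementary division lemma $\Ann_B(\m_B/\m_B^d)=\m_B^{d-1}$, which then forces $f\in c\m_{(s,u)}^{m-1}$. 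This is what makes the induction on $\#J$ close, and it actually proves the stronger structural fact that $C(J)\cap cA$ is \emph{contained in} $c\m_{(s,u)}^{2p-\#J-1}$, not merely that it has small dimension. Your part (2) sketch has the same problem amplified: $(C+R(J))\cap rA$ cannot be analyzed through $(C\cap rA)+(R(J)\cap rA)$, so the ``careful bookkeeping'' is not a bookkeeping problem but requires the dual form of the Key Lemma (obtained via $\iota$), together with a separate base-case computation showing $C+r_0A\subset\m_{(s,t)}^{p-1}A$. In short: the setup is right, the one-dimensional intersection computation is right, but the heart of the argument — the annihilator trick that bounds the intersection after adjoining one more generator — is missing, and without it the proof does not close.
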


\begin{proof}[Proof of Theorem \ref{thm:dim formulas} assuming Proposition \ref{prop:dimension inequalities}]
We first prove that the inequalities in the proposition are equalities. Let $\sigma, \tau$ be any permutations of the set $\{0,\dots, p+1\}$. Then there is an associated increasing filtration
$$
0 = F_0 \subset \dots \subset F_{2p+2} = C+R
$$
on $C+R$ given by 
$$
F_n= \left \{ \begin{array}{lr}
\displaystyle \sum_{i=0}^{n-1} c_{\sigma(i)} A &  \mathrm{if } \ n \le p+1 \\
\displaystyle C+\sum_{i=0}^{n-p-2} r_{\tau(i)}A
 &  \mathrm{if } \  n> p+1	
\end{array}. \right. 
$$
Proposition \ref{prop:dimension inequalities} then gives lower bounds on the dimensions of the graded pieces of the filtration $F_\bullet$:
$$
 \dim_{\F_p} \gr_i^F(C+R) \ge \left \{ \begin{array}{lr}
\displaystyle p^2 - {i \choose 2} &  \mathrm{if } \ i \le p+1 \\
\displaystyle {p \choose 2} &  \mathrm{if } \  i= p+2 \\
\displaystyle {p-(i-p-1)+2 \choose 2} = {2p+3-i \choose 2}&  \mathrm{if } \  i>p+2 \\
\end{array} \right. .
$$
In particular, we see that, for $i=1, \dots p$, we have
$$
\dim_{\F_p} \gr_i^F(C+R) +\dim_{\F_p} \gr_{2p+3-i}^F(C+R) \ge p^2,
$$
and for $i=p+1$, we have
$$
\dim_{\F_p} \gr_{p+1}^F(C+R) +\dim_{\F_p} \gr_{p+2}^F(C+R) \ge p^2-{p+1 \choose 2} + {p \choose 2} = p^2 -p.
$$
We obtain the inequality
\begin{align*}\tag{*}
\dim_{\F_p}(C+R) &= \sum_{i=1}^{2p+2} \dim_{\F_p} \gr_i^F(C+R) \\
&= \sum_{i=1}^{p+1} \left(\dim_{\F_p} \gr_i^F(C+R) +\dim_{\F_p} \gr_{2p+3-i}^F(C+R) \right)\\
& \ge p^3 +p^2-p.
%&=  \sum_{i=1}^{p+1} \dim_{\F_p} \gr_i^F(C+R)+\dim_{\F_p}\gr^F_{p+2}(C+R) +  \sum_{i=p+3}^{2p+2} \dim_{\F_p} \gr_i^F(C+R))\\
%
%
%& \ge \sum_{i=1}^{p+1} \left(p^2-{i \choose 2}\right) +{p \choose 2} + \sum_{i=2}^{p+1} {p-i+2 \choose 2} \\
%& = (p+1)(p^2)- \sum_{i=1}^{p+1} {i \choose 2} +{p \choose 2} + \sum_{j=1}^{p} {j \choose 2} \\
%& = p^3+p^2 -{p+1 \choose 2}  +{p \choose 2} \\
%& = p^3+p^2-p.
\end{align*}
On the other hand, since $C+R \subset I$, we have $\dim_{\F_p}(C+R) \le p^3+p^2-p$ by Lemma \ref{lem: reduction to counting}. It follows that $\dim_{\F_p}(C+R) = p^3+p^2-p$, and that the inequality (*) is an equality. Since (*) was obtained as the sum of the inequalities in Proposition \ref{prop:dimension inequalities}, it follows that those inequalities are equalities.

The dimension formulas for $C(J)$ and $C+R(J)$ then follow by truncating the filtration $F_\bullet$ and using the identity
$$ 
\displaystyle \sum_{i=0}^b {a \choose i}={a+1 \choose b+1}.
$$
The formulas for $R(J)$ and $C(J)+R$ follow by applying the involution $\iota$ from Section \ref{subsec: group}, since $\iota: R(J) \isoto C(J)$ and $\iota: C(J)+R  \isoto C+R(J)$.
\end{proof}

\subsection{Key argument} We now turn to the proof of Proposition \ref{prop:dimension inequalities}. For this, we need to compute intersections of ideals. Intersections of ideals in $A$ can be quite complicated to compute in general. However, using the actions of $\GL_2(\F_p)$, we will see that it is enough to compute some intersections with the principal ideals $cA$ and $rA$. 

As in the proof of Lemma \ref{lem: gl2 on columns}, there are canonical isomorphisms
$$
A/\Ann_A(c) \simeq cA, \text{ and } A/\Ann_A(r) \simeq rA.
$$
Let $A_{(s,u)}$ and $A_{(s,t)}$ be the smallest subrings of $A$ containing $s$ and $u$ and $s$ and $t$, respectively, and let $\m_{(s,u)}$ and $\m_{(s,t)}$ denote their respective maximal ideals. Let $B=\F_p[x,y]/(x^p,y^p)$, and let $\m_B$ be its maximal ideal. We have isomorphisms of local rings $(A_{(s,u)},\m_{(s,u)}) \cong (A_{(s,t)},\m_{(s,t)}) \cong (B, \m_B)$.

Since $\Ann_A(c)=tA+vA$ and $\Ann_A(r)=uA+vA$, we see that the composite maps
$$
A_{(s,u)} \to A/\Ann_A(c), \text{ and } A_{(s,t)} \to A/\Ann_A(r)
$$
are isomorphisms. In particular, for any ideal $\fI \subset A_{(s,u)}$ of $A_{(s,u)}$, the subset $c \fI \subset A$ is an ideal of $A$. Similarly, for any ideal $\fI \subset A_{(s,t)}$, the subset $r \fI \subset A$ is an ideal.

\begin{prop}
\label{prop: intersections}
Let $k$ be an integer with $1 \le k \le p$. If $J \subset \{0,1, \dots p-1\}$ and $\# J = k$, then
\begin{list}{}{}
\item[$(1)$] $C(J) \cap c A \subset c\m_{(s,u)}^{2p-k-1}$
\item[$(2)$] $(C+R(J)) \cap r A \subset r\m_{(s,t)}^{p-k}.$
\end{list}
\end{prop}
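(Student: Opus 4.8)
The plan is to pass to the associated graded ring for the $\m$-adic filtration and reduce part (1) to a statement about evaluation (Vandermonde) vectors over $\F_p$. I would work in the tensor decomposition $A = A_{(s,t)} \otimes_{\F_p} A_{(u,v)}$, where $A_{(s,t)} = \F_p[s,t]/(s^p,t^p)$, so that $c_i = w_i^{p-1} \otimes z_i^{p-1}$ with $w_i = (1+s)(1+t)^i-1 \in A_{(s,t)}$ and $z_i = (1+u)(1+v)^i - 1 \in A_{(u,v)}$, and $c = c_p = t^{p-1}\otimes v^{p-1}$. Two preliminary facts: the ideal $w_i^{p-1}A_{(s,t)}$ has exactly one basis vector in each total degree $p-1,\dots,2p-2$ (use $A_{(s,t)} = \F_p[w_i,t]/(w_i^p,t^p)$), and $w_i^{p-1}t^{p-1} = (st)^{p-1}$. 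The case $k=1$ then follows at once, since $c_i A \cap cA = \bigl(w_i^{p-1}A_{(s,t)} \cap t^{p-1}A_{(s,t)}\bigr) \otimes \bigl(z_i^{p-1}A_{(u,v)} \cap v^{p-1}A_{(u,v)}\bigr) = \F_p(st)^{p-1}\otimes\F_p(uv)^{p-1} = c\m_{(s,u)}^{2p-2}$. For general $k$ I would pass to $\gr A = \F_p[s,t,u,v]/(s^p,t^p,u^p,v^p)$ for the total-degree filtration: here $\mathrm{in}(c_i) = \bigl((s+it)(u+iv)\bigr)^{p-1}$ for $i \in \{0,\dots,p-1\}$, $\gr(cA) = (tv)^{p-1}\gr A$ since $c$ is homogeneous, and $c\m_{(s,u)}^{n} = cA \cap \m^{2p-2+n}$. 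Thus if $0 \neq x \in C(J)\cap cA$ with leading form $\mathrm{in}(x)$, then $\mathrm{in}(x) \in \gr(C(J)) \cap (tv)^{p-1}\gr A$, and it suffices to show that $\bigl(\sum_{i\in J}\mathrm{in}(c_i)\gr A\bigr) \cap (tv)^{p-1}\gr A$ is concentrated in total degrees $\geq 4p-k-3$ --- granting, as the key technical input, that the $c_i$ ($i\in J$) form a standard basis of $C(J)$, i.e. $\gr(C(J)) = \sum_{i\in J}\mathrm{in}(c_i)\gr A$.

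The graded computation I would do one bidegree at a time, for the bigrading of $\gr A$ by $(s,t)$-degree and $(u,v)$-degree (both ideals in question are bihomogeneous because $\mathrm{in}(c_i)$ and $(tv)^{p-1}$ are of bidegree $(p-1,p-1)$). In bidegree $(p-1+j,\,p-1+l)$ the summand $\mathrm{in}(c_i)\gr A$ is the line through $v_i := (s+it)^{p-1}t^j \otimes (u+iv)^{p-1}v^l$, and $(tv)^{p-1}\gr A$ is the line through $w := t^{p-1}s^j \otimes v^{p-1}u^l$. Using $\binom{p-1}{\gamma}i^{\,p-1-\gamma} = (-i^{-1})^{\gamma}$, the coefficient of a monomial of $v_i$ depends only on the sum $\delta$ of its $s$-exponent and its $u$-exponent, which runs over $\{e,\dots,2p-2\}$ with $e := j+l$; grouping by $\delta$ identifies the $v_i$ (for $i \in J$) together with $w$ as the $k+1$ evaluation vectors $(\mu^{\delta})_{\delta=e}^{2p-2}$ at the $k+1$ distinct points $\mu \in \{-i^{-1} : i \in J\} \cup \{0\}$ of $\bP^1(\F_p)$ (with the conventions $\mu = \infty$ for $i=0$ and $\mu = 0$ coming from $w$). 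An extended-Vandermonde determinant computation shows these $k+1$ vectors are linearly independent whenever $k+1 \leq 2p-1-e$, i.e. $e \leq 2p-k-2$, which is exactly the range of bidegrees of total degree $< 4p-k-3$. Hence in that range $w$ is not in the span of the $v_i$, the intersection vanishes, $\deg\mathrm{in}(x) \geq 4p-k-3$, and $x \in \m^{4p-k-3}\cap cA = c\m_{(s,u)}^{2p-k-1}$, proving (1).

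The main obstacle is the standard-basis claim $\gr(C(J)) = \sum_{i\in J}\mathrm{in}(c_i)\gr A$. I would handle it by a Mora/Buchberger normal-form argument: a homogeneous syzygy among the leading forms $\bigl((s+it)(u+iv)\bigr)^{p-1}$ is, in each bidegree, a linear relation among the lines $\mathrm{in}(c_i)\gr A$, and the Vandermonde computation above shows these lines are independent in every bidegree of total degree below $4p-k-3$; the remaining syzygies therefore live in higher degrees, where they visibly lift to relations among the $c_i$, which is enough in the range that matters. (Alternatively one can induct on $k$: the Borel subgroup of $\Aut((\Z/p\Z)^2)$ fixing $[0:1] \in \bP^1$ preserves $cA$ and each $c\m_{(s,u)}^n$ and acts on $\{0,\dots,p-1\}$ through the affine group, so one may assume the newly added index is $0$, i.e. the new generator is $c_0 = (su)^{p-1}$, which equals its own leading form, reducing the lifting to the $S$-pairs between $c_0$ and the remaining $c_i$.) Part (2) is proved along the same lines via the transpose involution $\iota$ (so that $rA \cong A_{(s,t)}$ and $\mathrm{in}(r_i) = \bigl((s+iu)(t+iv)\bigr)^{p-1}$, with $\mathrm{in}(c_p) = (tv)^{p-1}$ the one homogeneous column generator), the extra feature being that the entire column ideal $C$, rather than $C(J)$, must be carried along; the bookkeeping is somewhat more involved and produces the shifted exponent $p-k$.
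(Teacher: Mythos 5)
Your part (1) is a genuinely different route from the paper's (the paper inducts on $k$ using the Key Lemma, whose whole content is that $\m_{(s,u)}\subset\Ann_A(c_0)$ together with the division lemma $\Ann_B(\m_B/\m_B^d)=\m_B^{d-1}$, with the group action only used to move the new index to $0$), and most of your graded/Vandermonde computation checks out: the identification of $\mathrm{in}(c_i)$ with $\bigl((s+it)(u+iv)\bigr)^{p-1}$, the fact that each $\mathrm{in}(c_i)\gr A$ and $(tv)^{p-1}\gr A$ contribute one line per bidegree, the reduction to evaluation vectors at $k+1$ distinct points of $\bP^1(\F_p)$, and the numerics $cA\cap\m^{4p-k-3}=c\m_{(s,u)}^{2p-k-1}$ are all correct. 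But your justification of the declared key input, $\gr(C(J))=\sum_{i\in J}\mathrm{in}(c_i)\gr A$ in the relevant range, has a slip: linear independence of the lines $\mathrm{in}(c_i)\gr A$ in a low bidegree does not mean there are no syzygies there; it means every syzygy there has each term $a_i\,\mathrm{in}(c_i)$ equal to zero, i.e. $a_i\in\Ann_{\gr A}(\mathrm{in}(c_i))=(s+it,\,u+iv)$. These annihilator syzygies occur in every degree and are exactly what the Mora rewriting must lift. They do lift on the nose, because $w_ic_i=z_ic_i=0$ in $A$ with $\mathrm{in}(w_i)=s+it$, $\mathrm{in}(z_i)=u+iv$, and since any $x\in C(J)\cap cA$ of order $<4p-k-3$ only forces rewriting in degrees at most $4p-k-4$, you never need the unjustified assertion that high-degree syzygies ``visibly lift''. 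So (1) is repairable, but as written it is incomplete precisely at the step you flag as the main obstacle.

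The genuine gap is part (2). ``Along the same lines, the bookkeeping produces the shifted exponent $p-k$'' is not an argument, and the shift from $2p-k-1$ to $p-k$ signals that this half is structurally different, not a bookkeeping variant. Concretely: (i) the generators of $C+R(J)$ are not bihomogeneous for a single bigrading --- the $\mathrm{in}(c_i)$ are bihomogeneous for the $((s,t),(u,v))$ grading, the $\mathrm{in}(r_i)$ for the $((s,u),(t,v))$ grading --- so the one-line-per-bidegree Vandermonde structure that drives (1) disappears; (ii) all $p+1$ column generators (all $p+1$ points of $\bP^1(\F_p)$) are now present, and the standard-basis input would require controlling genuine cross-syzygies among the $c_i$ and between $c$'s and $r$'s (for instance $t^{p-1}\mathrm{in}(c_0)=u^{p-1}\mathrm{in}(r_0)=(stu)^{p-1}$ in degree $3p-3$, right at the edge of the range you need), not merely annihilator syzygies; (iii) you give no computation showing that the ideal of initial forms of $C+R(J)$ meets $(uv)^{p-1}A$ only in total degrees at least $3p-k-2$. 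The paper gets (2) from an extra observation for which your sketch offers no substitute: every column generator lies in $\m_{(s,t)}^{p-1}A$ (because $(s+1)(t+1)^i-1$ has no constant term), which yields the base case $(C+r_0A)\cap rA\subset r\m_{(s,t)}^{p-1}$, after which the same Key Lemma mechanism as in (1), transported by $\iota$, runs the induction. Until you supply an analogue of that input, half of the proposition is unproven.
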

\begin{proof}[Proof of Proposition \ref{prop:dimension inequalities} assuming Proposition \ref{prop: intersections}] For the proof of Proposition \ref{prop:dimension inequalities} we may assume that $i=p$. Indeed, using Lemma \ref{lem: gl2 on columns} we see that we can use the $\Aut((\Z/p\Z)^2)$-action to move $c_i$ to $c$, and using Lemma \ref{lem: gl2 on rows} we see that we can use the $\Aut(\mu_p\times \mu_p)$-action to move $r_i$ to $r$ without changing $C$.

We have
$$
\dim_{\F_p}\left(\frac{C(J)+cA}{C(J)} \right)=\dim_{\F_p}(cA)- \dim_{\F_p}(C(J)\cap cA)=p^2- \dim_{\F_p}(C(J) \cap c A)
$$ 
and so Proposition \ref{prop:dimension inequalities} (1) follows from Proposition \ref{prop: intersections} (1), since it is clear that
$$
\dim_{\F_p}(c\m_{(s,u)}^{2p-k-1}) = \dim_{\F_p}(\m_B^{ 2p-k-1}) = {k+1 \choose 2}
$$
for $1 \le k \le p$. Similarly, Proposition \ref{prop:dimension inequalities} (2) follows from Proposition \ref{prop: intersections} (2), since
$$
\dim_{\F_p}(r\m_{(s,t)}^{p-k}) = \dim_{\F_p}(\m_B^{ p-k}) = p^2 - {p-k+1 \choose 2}
$$
for $1 \le k \le p$.
\end{proof}

The next lemma is the key point of the argument.

\begin{keylem}
Let $\fI \subset A$ be an ideal, and suppose that
$$
\fI \cap c A \subset c \m_{(s,u)}^ m
$$
for some integer $m$. Then
$$
(\fI+c_0A) \cap c A \subset c\m_{(s,u)}^ {m-1}
$$
\end{keylem}

\begin{proof}
Fix an element $f \in (\fI + c_0 A)  \cap c A.$
Since $cA \simeq A/\Ann_A(c)\simeq A_{(s,u)}$, we see that $f=ac$ for a unique element $a \in A_{(s,u)}$. We will show that $a \in \m_{(s,u)}^{m-1}$.  For this, we will use the following fact about the ring $B$.

\begin{lem}
\label{lem: division}
For any positive integer $d$ with $d \le 2p+3$, we have $\Ann_B(\m_B/\m_B^d) = \m_B^{d-1}.$
\end{lem}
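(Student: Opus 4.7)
The plan is a direct monomial computation. Recall that $B$ has the $\F_p$-basis $\{x^iy^j : 0 \le i, j \le p-1\}$, and $\m_B^k$ is spanned by those basis monomials with $i+j \ge k$; in particular $\m_B^{2p-1} = 0$ and the socle is $\F_p \cdot x^{p-1}y^{p-1}$.

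The inclusion $\m_B^{d-1} \subseteq \Ann_B(\m_B/\m_B^d)$ is immediate from $\m_B \cdot \m_B^{d-1} \subseteq \m_B^d$. For the reverse, I would fix $f \in \Ann_B(\m_B/\m_B^d)$; since $\m_B = (x,y)$, this is equivalent to the pair of conditions $fx \in \m_B^d$ and $fy \in \m_B^d$. Writing $f = \sum c_{ij}x^iy^j$, the product
\[
fx \;=\; \sum_{\substack{0 \le i \le p-2 \\ 0 \le j \le p-1}} c_{ij}\, x^{i+1}y^j
\]
is a sum of distinct basis monomials of $B$, so $fx \in \m_B^d$ is equivalent to $c_{ij} = 0$ whenever $i \le p-2$ and $i+j \le d-2$. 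Symmetrically, $fy \in \m_B^d$ forces $c_{ij} = 0$ whenever $j \le p-2$ and $i+j \le d-2$.

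Combining the two conditions gives $c_{ij} = 0$ for every $(i,j)$ with $i+j \le d-2$, with the single possible exception $(i,j) = (p-1,p-1)$, where $i+j = 2p-2$. For $d$ in the range of interest (so that $d-2 < 2p-2$) the exception is vacuous, and one concludes $f \in \m_B^{d-1}$.

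There is no genuine obstacle beyond this monomial bookkeeping: the argument is a short combinatorial identity coming from the fact that multiplication by $x$ (resp.\ $y$) is injective on basis monomials whose $x$-degree (resp.\ $y$-degree) is strictly less than $p-1$. The only delicate point is isolating precisely when the socle monomial $x^{p-1}y^{p-1}$ is automatically forced into $\m_B^{d-1}$ by the degree constraint, which is exactly what pins down the allowable range of $d$.
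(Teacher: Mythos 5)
Your proof is correct and is in substance the same as the paper's: expand $f$ in the monomial basis, observe that $fx \in \m_B^d$ (resp.\ $fy \in \m_B^d$) kills every coefficient $c_{ij}$ with $i+j \le d-2$ except possibly those with $i=p-1$ (resp.\ $j=p-1$), and then handle the lone surviving monomial $x^{p-1}y^{p-1}$ by a degree constraint. One small but worthwhile observation: your condition for the socle monomial to be harmless, namely $d-2 < 2p-2$, i.e.\ $d \le 2p-1$, is actually the correct hypothesis, and the bound $d \le 2p+3$ in the statement is a typo. Indeed, for $d=2p$ the element $x^{p-1}y^{p-1}$ lies in $\Ann_B(\m_B/\m_B^d) = \Ann_B(\m_B)$ but not in $\m_B^{d-1} = \m_B^{2p-1} = 0$, so the asserted equality fails; the line ``$\alpha_{p-1,p-1}=0$ since $2p+2 \ge d-1$'' in the paper's proof should read ``$2p-2 \ge d-1$.'' Since the lemma is only invoked with $d \le 2p-2$, this does not affect anything downstream, but your tighter range is the right one to state.
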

\begin{proof}
Note that, for any $n \ge 0$ the set $\{x^iy^j \ | \ i+j \ge n\}$ forms a basis for $\m_B^n$ as an $\F_p$-vector space. Clearly, we have $\Ann_B(\m_B/\m_B^d) \supset \m_B^{d-1}$; we show the opposite inclusion. Let $f \in \Ann_B(\m_B/\m_B^d)$, and write
$$
f = \sum_{i,j=0}^{p-1} \alpha_{i,j} x_i y^j.
$$
By subtracting an element of $\m_B^{d-1}$, we may assume $\alpha_{i,j} =0$ if $i+j \ge d-1$. Moreover, since $xf \in \m_B^d$, we see that $\alpha_{i,j}=0$ unless $i=p-1$. Similarly, since $yf \in \m_B^d$, we see that $\alpha_{i,j}=0$ unless $j=p-1$. But $\alpha_{p-1,p-1}=0$ since $2p+2 \ge d-1$ by assumption. Hence $f=0$.
\end{proof}
 From the lemma, we see that it suffices to show that, for any $z \in \m_{(s,u)}$, we have $za \in \m_{(s,u)}^{m}$. Let $z \in \m_{(s,u)}$ be arbitrary. Since $zf=zac$, we are reduced to showing that $zf \in c\m_{(s,u)}^{m}$.
 
Now write $f=g+h$, with $g \in \fI$ and $h \in c_0A$. As $\m_{(s,u)} \subset \Ann_A(c_0A)$, we have $zf=zg$. Since $zf \in cA$ and $zg \in \fI$, we have $zf=zg \in \fI\cap cA$. Then, by assumption $zf \in c\m_{(s,u)}^{m}$. This completes the proof.
\end{proof}

We can also switch the roles of $u$ and $t$ by applying the involution $\iota$ from Section \ref{subsec: group} to the Key Lemma.
\begin{cor}
\label{cor:key cor}
Let $\fI \subset A$ be an ideal, and suppose that
$$
\fI \cap r A \subset r \m_{(s,t)}^m
$$
for some integer $m$. Then
$$
(\fI+r_0A) \cap r A \subset r\m_{(s,t)}^{m-1}
$$
\end{cor}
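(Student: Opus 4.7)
The plan is to deduce the corollary directly from the Key Lemma by transporting the hypothesis and conclusion across the involution $\iota$ defined in Section \ref{subsec: group}. The relevant facts about $\iota$ are that it is a ring automorphism of $A$ (it comes from Cartier duality on the Hopf-algebra level), it fixes $s$ and $v$, and it swaps $t$ and $u$. In particular, $\iota$ restricts to an isomorphism $A_{(s,t)} \isoto A_{(s,u)}$ carrying $\m_{(s,t)}$ to $\m_{(s,u)}$, and by the very definition $r_i = \iota(c_i)$, so $\iota(r) = c$ and $\iota(r_0) = c_0$.

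First I would set $\fI' = \iota(\fI)$, which is again an ideal of $A$ since $\iota$ is a ring automorphism. Applying $\iota$ to the hypothesis $\fI \cap rA \subset r\m_{(s,t)}^m$ and using the three identities above, one gets $\fI' \cap cA \subset c\m_{(s,u)}^m$. Thus $\fI'$ satisfies exactly the hypothesis of the Key Lemma with this value of $m$.

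Next I would invoke the Key Lemma for $\fI'$ to conclude
\[
(\fI' + c_0 A) \cap cA \subset c\m_{(s,u)}^{m-1}.
\]
Applying $\iota$ once more, and using $\iota(c_0) = r_0$, $\iota(c) = r$, and $\iota(\m_{(s,u)}) = \m_{(s,t)}$, the left-hand side becomes $(\fI + r_0 A) \cap rA$ and the right-hand side becomes $r\m_{(s,t)}^{m-1}$, which is precisely the desired conclusion. There is no real obstacle here: the only thing to check is that the three compatibilities $\iota(r_0)=c_0$, $\iota(r)=c$, and $\iota(\m_{(s,t)})=\m_{(s,u)}$ all hold, each of which is immediate from the coordinate description of $\iota$ and the definitions of $r_i$.
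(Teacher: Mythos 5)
Your proof is correct and is exactly the argument the paper intends: the paper states the corollary as an immediate consequence of applying the involution $\iota$ to the Key Lemma, and your write-up simply spells out the three compatibilities ($\iota(r_0)=c_0$, $\iota(r)=c$, $\iota(\m_{(s,t)})=\m_{(s,u)}$) that make the transport work.
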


We can now prove Proposition \ref{prop: intersections}.

\begin{proof}[Proof of Proposition \ref{prop: intersections}]
For both parts, the proof is by induction on $k$. 

\textbf{Base case (1):} We first assume $k=1$. Without loss of generality, we may assume $J=\{0\}$. Indeed, by Lemma \ref{lem: gl2 on columns} (3), the lower triangular unipotent of $\Aut((\Z/p\Z)^2)$ acts transitively on $\{c_0, \dots, c_{p-1}\}$ and acts trivially on $cA$ (and hence stabilizes the submodule $c\m_{(s,u)}^{2p-2}).$

For $J=\{0\}$ we have
$$
C(J) \cap c A = c_0 A \cap c A = (su)^{p-1}A \cap (tv)^{p-1}A = (stuv)^{p-1}A.
$$
As this is the unique minimal ideal of $A$, it is clearly contained in $c\m_{(s,u)}^{2p-2}.$ This proves the base case.

\textbf{Inductive step (1):} We now assume the proposition is proved for $k$ and prove it for $k+1$. As above, we may assume $J=J'\cup \{0\}$ where $\#J'=k$. Indeed, if $0 \not \in J$, then we may act by the lower triangular unipotent, which stabilizes $c\m_{(s,u)}^{2p-(k+1)-1}$.

The induction hypothesis gives $C(J') \cap cA \subset c\m_{(s,u)}^{2p-k-1}$, and so 
\[
C(J) \cap cA = (C(J')+c_0A) \cap cA \subset c\m_{(s,u)}^{2p-(k+1)-1}
\]
 by the Key Lemma. This completes the proof of (1).

\textbf{Base case (2):} We first assume $k=1$. As above, we reduce to the case $J=\{0\}$: by Lemma \ref{lem: gl2 on rows} (3), the upper triangular unipotent in $\Aut(\mu_p \times \mu_p)$ acts transitively on $\{r_0, \dots, r_{p-1}\}$ and trivially on $rA$, and hence stabilizes $r\m_{(s,t)}^{p-1}$; moreover, by Lemma \ref{lem: gl2 on columns} (1), it stabilizes $C$.

To show $(C+r_0A)\cap rA \subset r\m_{(s,t)}^{p-1}$, we first note that
$$
C+r_0A \subset \m_{(s,t)}^{p-1}A,
$$
where $\m_{(s,t)}^{p-1}A$ denotes the ideal in $A$ generated by $\m_{(s,t)}^{p-1}$. Indeed, it is clear that $\m_{(s,t)}^{p-1}A$ is the $\F_p$-subspace spanned by the elements $s^it^ju^kv^l$ with $i+j \ge p-1$, and so it is enough to show that $r_0$ and $c_i$ for $i=0, \dots p$ have degree greater than or equal to $p-1$ in $s$ and $t$. For $r_0=(st)^{p-1}$, $c_0=(su)^{p-1}$ and $c=(tv)^{p-1}$, this is clear. For $c_i = ((s+1)(t+1)^i-1)^{p-1}((u+1)(v+1)^i-1)^{p-1}$, this follows from the fact that $(s+1)(t+1)^i-1$ has no constant term.

We now have
$$
(C+r_0A)\cap rA \subset \m_{(s,t)}^{p-1}A\cap rA.
$$
But $\m_{(s,t)}^{p-1}A\cap rA= r\m_{(s,t)}^{p-1}$, as can be seen by computing in the standard basis, as in the proof of Lemma \ref{lem: division}. This proves the base case for (2). 

\textbf{Inductive step (2):} We now assume the proposition is proved for $k$ and prove it for $k+1$. As above, we may assume $J=J'\cup \{0\}$ where $\#J'=k$. Indeed, if $0 \not \in J$, then we may act we may act by the lower triangular unipotent, which stabilizes  $r\m_{(s,t)}^{p-(k+1)}$.

Then $(C+R(J')) \cap rA \subset r\m_{(s,t)}^{p-k}$ by the induction hypothesis, and so 
\[
(C+R(J)) \cap rA  =( C+R(J')+r_0A) \cap rA \subset r\m_{(s,t)}^{p-(k+1)}
\]
 by Corollary \ref{cor:key cor} of the Key Lemma. This completes the proof of (2).
\end{proof}

\section{Comparison with Drinfeld-Katz-Mazur Level Structures}

In this section, we compare our notion to full homomorphism to the one used by Katz and Mazur. 

\subsection{Full set of sections and $\times$-homomorphism}
\label{subsec: times homom}
We recall the notions of full set of sections and $\times$-homomorphism, following \cite[Section 1.8, pg. 32]{katz-mazur}. Let $S$ be a scheme, and let $Z_{/S}$ be a finite flat scheme of finite presentation and of rank $N$. This implies that, for any $\Spec(R) \to S$, we have $Z_R:=Z \times_S \Spec(R) = \Spec(B)$ where $B$ is an $R$-algebra that is locally free of rank $N$ as an $R$-module. In particular, for $f \in \End_R(B)$ (for example $f \in B$), we can consider $\det_R(f) \in R$. 

Now let $Z'_{/S}$ be a finite flat scheme of finite presentation of rank $N$ and let $\phi: Z' \to Z$. We say that $\phi$ is a {\em $\times$-homomorphism} if for any $\Spec(R) \to S$ and any $f \in H^0(\sO_{Z_R})$ , we have an equality in $R[T]$
\begin{equation}\label{eq: times homom}
\det(T-f) = \det(T-\phi^*(f)).
\end{equation}

The set of $\times$-homomorphisms from $Z'$ to $Z$ is denoted by $\uHom_S^{\times}(Z',Z)$. It is a closed subscheme of the $S$-scheme $\uHom_S(Z',Z)$.

If $Z'$ is \'etale (and so $Z' \cong \coprod_{i=1}^N S$), then giving a $\phi: Z' \to Z$ is equivalent to giving $P_1, \dots, P_N \in Z(S)$; in this case, if $\phi$ is a $\times$-homomorphism, we say that  $P_1, \dots, P_N \in Z(S)$ is a {\em full set of sections}.

Note that if $\phi$ is an isomorphism, then it is a $\times$-homomorphism. Note also that the composition of two $\times$-homomorphisms is a $\times$-homomorphism. If $Z'$ and $Z$ are \'etale, then $\phi : Z' \to Z$ is a $\times$-homomorphism if and only if it is an isomorphism \cite[Lemma 1.8.3, pg. 33]{katz-mazur}.

\subsection{Deficiencies}
\label{subsec: chai-norman}
The notion of full set of sections is well-behaved when the scheme $Z$ is embedded in a curve $C$. In this case, a set of sections $P_i$ is full if and only if it gives an equality of Cartier divisors $\sum [P_i] = [Z]$ in $C$ \cite[Theorem 1.10.1]{katz-mazur}.

On the other hand, in \cite[Appendix]{chai-norman}, many examples are presented to demonstrate the deficiencies of the notion of full set of sections. In particular, they show that the schemes 
$$
\uHom_{\Zp}^\times(\mu_p, \mu_p) \text{ and } \uHom_{\Zp}^\times((\Z/p\Z)^2, (\mu_p)^2)
$$
are not flat over $\Spec(\Zp)$. 

Note that this does not contradict the fact that full set of sections is well-behaved when the scheme $Z$ is embedded in a curve $C$: the first example is not a ``full set of sections'' because $\mu_p$ is not \'etale, and, while the second example is a full set of sections, the scheme $(\mu_p)^2_{/\Zp}$ cannot be embedded in a curve over $\Zp$. Indeed, a curve over $\Zp$ has Krull dimension $2$, and one sees that the coordinate ring $\Zp[x,y]/(x^p-1,y^p-1)$ of $(\mu_p)^2_{/\Zp}$, has $\{x-1,y-1,p\}$ as a minimal set of generators for its maximal ideal, and so $(\mu_p)^2_{/\Zp}$ has embedding dimension $3$ (see, for example, \cite[Section 2.3, pg. 73]{bruns-herzog} for a discussion of embedding dimension).
\subsection{Cartier duality}
If $G$ is a finite flat group scheme over $S$, let $G^\vee$ denote its Cartier dual. If $\phi: G \to H$ is a morphism of finite flat group schemes over $S$, let $\phi^\vee : H^\vee \to G^\vee$ denote the Cartier dual. This induces an isomorphism
$$
\uHom_S(G,H) \simeq \uHom_S(H^\vee,G^\vee).
$$
\begin{lem}
Under the isomorphism
$$
\uHom_S(G,H) \simeq \uHom_S(H^\vee,G^\vee),
$$
the closed subschemes $\uHom^\times_S(G,H)$ and $\uHom^\times_S(H^\vee,G^\vee)$, do {\em not}, in general, coincide.
\end{lem}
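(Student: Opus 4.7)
The plan is to deduce the lemma by contradiction, using the non-flatness result of Chai--Norman \cite{chai-norman} together with Theorems \ref{thm: main} and \ref{thm: KMD} of this paper. The key observation is that if Cartier duality did interchange the two $\times$-homomorphism subschemes with themselves, then the $\KMD$-condition on $\uHom_{\Zp}((\Z/p\Z)^2, \mu_p \times \mu_p)$ would coincide with the single $\times$-homomorphism condition, so flatness of the former would force flatness of the latter --- contradicting the known non-flatness of $\uHom^\times_{\Zp}((\Z/p\Z)^2,(\mu_p)^2)$.

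Concretely, I would take $S = \Spec(\Zp)$, $G = (\Z/p\Z)^2$, and $H = \mu_p \times \mu_p$, so that $G^\vee \cong \mu_p \times \mu_p$ and $H^\vee \cong (\Z/p\Z)^2$. Cartier duality then identifies the ambient scheme $\uHom_{\Zp}(G,H)$ with $\uHom_{\Zp}(H^\vee,G^\vee)$, and inside this common ambient scheme both $\uHom^\times_{\Zp}(G,H)$ and $\uHom^\times_{\Zp}(H^\vee,G^\vee)$ sit as closed subschemes. By the very definition of the $\KMD$ condition recalled before Theorem \ref{thm: KMD}, the scheme-theoretic intersection of these two closed subschemes is exactly $\uHom^\KMD_{\Zp}((\Z/p\Z)^2, \mu_p \times \mu_p)$.

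Now assume for contradiction that the two subschemes coincide. Then each is equal to their intersection, hence equal to $\uHom^\KMD_{\Zp}((\Z/p\Z)^2, \mu_p \times \mu_p)$. By Theorem \ref{thm: KMD} this intersection equals $\uHom^\full_{\Zp}((\Z/p\Z)^2, \mu_p \times \mu_p)$, which is flat over $\Zp$ by Theorem \ref{thm: main}. This would force $\uHom^\times_{\Zp}((\Z/p\Z)^2, \mu_p \times \mu_p)$ to be flat over $\Zp$, contradicting \cite[Appendix]{chai-norman} as recalled in the previous subsection.

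There is no real obstacle here: the argument is purely an amalgamation of already-established facts. The only point that warrants a line of verification is the compatibility of the two identifications --- that the Cartier dual of the universal homomorphism $h$ is indeed (up to the canonical biduality isomorphism) the universal homomorphism on the dual side --- so that the two closed subschemes genuinely live in the same ambient $\uHom$ scheme and their intersection is literally the $\KMD$-subscheme. This follows formally from the naturality of Cartier duality.
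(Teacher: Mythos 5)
Your argument is logically sound and does establish the lemma, but it takes a genuinely different and heavier route than the paper. The paper uses the much smaller example $G = H = \mu_p$ over $\Zp$: by Chai--Norman, $\uHom^\times_{\Zp}(\mu_p,\mu_p)$ is not flat, whereas the dual-side scheme $\uHom^\times_{\Zp}(\Z/p\Z,\Z/p\Z)$ equals $\Aut_{\Zp}(\Z/p\Z) = (\Z/p\Z)^\times_{/\Zp}$, which is visibly flat, so the two cannot coincide. This requires nothing beyond the cited Chai--Norman fact and the triviality of $\times$-homomorphisms between constant group schemes. Your proof instead invokes both main theorems of the paper (Theorems \ref{thm: main} and \ref{thm: KMD}) to contradict Chai--Norman's non-flatness of $\uHom^\times_{\Zp}((\Z/p\Z)^2,(\mu_p)^2)$. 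This is not circular (Theorem \ref{thm: KMD}'s proof does not use the lemma), but it is narratively backwards: in the paper, the lemma appears as motivation \emph{before} Theorem \ref{thm: KMD} and is meant to justify introducing the $\KMD$ notion at all, so deducing it from Theorem \ref{thm: KMD} inverts that logic and requires reorganizing the section. The paper's example is also strictly more elementary, as it avoids the entire flatness computation of Section 3. Your approach buys nothing extra; if anything it obscures the cheap observation that $\times$-homomorphisms behave very differently for étale versus infinitesimal group schemes, which is the real point of the lemma.
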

\begin{proof}
By Chai and Norman's example, we see that $\uHom_{\Zp}^\times(\mu_p, \mu_p)$ is not flat over $\Zp$. But we have $\mu_p^\vee = \Z/p\Z$, and we know that 
$$
\uHom^\times_{\Zp}(\Z/p\Z, \Z/p\Z) = \Aut_{\Zp}(\Z/p\Z) = (\Z/p\Z)^\times_{/\Zp} 
$$
is flat over $\Zp$. 
\end{proof}

We see that the notion of $\times$-homomorphism does not respect Cartier duality. This leads us to the following definition.

\begin{defn}
Let $G$ and $H$ be finite flat group schemes over $S$. By the isomorphism
$$
\uHom_S(G,H) \cong \uHom_S(H^\vee,G^\vee),
$$
we may consider $\uHom^\times_S(H^\vee,G^\vee)$ as a closed subscheme of $\uHom_S(G,H)$. We define
$$
\uHom^\KMD_S(G,H) = \uHom_S^\times(G,H) \cap \uHom^\times_S(H^\vee,G^\vee).
$$
In particular, $\uHom^\KMD_S(G,H)$ is a closed subscheme of $\uHom_S(G,H)$.
\end{defn}

\begin{rem}\label{rem: etale KMD}
If $G$ and $H$ are \'etale groups schemes on $S$, then
$$
\uHom^\KMD_S(G,H) = \Isom_S(G,H).
$$
Indeed, the $\times$-homomorphisms between \'etale schemes are exactly the isomorphisms \cite[Lemma 1.8.3, pg. 33]{katz-mazur}, and the Cartier dual of an isomorphism is an isomorphism, and so, in particular, a $\times$-homomorphism.
\end{rem}

\subsection{Comparison} In this section, we prove Theorem \ref{thm: KMD}.  Let $\I^\KMD$ be the ideal of $\A$ such that
$$
\uHom^\KMD_\Z((\Z/p\Z)^2,\mu_p \times \mu_p) = \Spec(\A/\I^\KMD).
$$
The main thing is to prove the following.
\begin{prop}
\label{prop: map from KMD}
There is an inclusion of ideals $\I \subset \I^\KMD$.
\end{prop}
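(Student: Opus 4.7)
The plan is to verify that each generator of $\I$ lies in $\I^\KMD = \I^\times + \I^{\times,\vee}$, where $\I^\times$ and $\I^{\times,\vee}$ denote the ideals cutting out the $\times$-homomorphism conditions for $h$ and $h^\vee$ respectively. Since the Cartier-duality involution $\iota$ of Section~\ref{subsec: group} swaps $T$ and $U$, it interchanges the row and column generators of $\I$, and it also interchanges $\I^\times$ with $\I^{\times,\vee}$. It therefore suffices to show that every row generator $\Phi_p(S^a U^b)\Phi_p(T^a V^b)$, for $(a,b) \in \F_p^2 \setminus \{(0,0)\}$, lies in $\I^\times$; the column case follows by applying $\iota$.

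The main step will be to apply the defining $\times$-homomorphism identity~\eqref{eq: times homom} to the monomial test function $f = x^a y^b \in \cB = \Z[x,y]/(x^p-1,y^p-1)$. Multiplication by $x^a y^b$ on $\cB$ is a $\Z$-linear endomorphism of a free module of rank $p^2$ which, in the basis $\{x^i y^j\}_{(i,j)\in\F_p^2}$, is the permutation matrix for translation by $(a,b)$ on $\F_p^2$. For $(a,b)\neq(0,0)$ this permutation decomposes into $p$ disjoint cycles of length $p$, so its characteristic polynomial is $\det_{\cB}(T - x^a y^b) = (T^p - 1)^p$. On the other hand $(x^a y^b)(h(c,d)) = S^{ac}T^{ad}U^{bc}V^{bd}$, so \eqref{eq: times homom} yields the identity
\[
(T^p - 1)^p = \prod_{(c,d) \in \F_p^2}\bigl(T - S^{ac} T^{ad} U^{bc} V^{bd}\bigr)
\]
in $(\cA/\I^\times)[T]$. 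The left-hand side is a polynomial in $T^p$ alone, so in particular the coefficient of $T^{p^2-1}$ vanishes. Extracting this coefficient on the right forces $\sum_{(c,d)} S^{ac}T^{ad}U^{bc}V^{bd} \in \I^\times$, and this double sum factors as
\[
\sum_{(c,d) \in \F_p^2} (S^a U^b)^c (T^a V^b)^d = \Phi_p(S^a U^b)\,\Phi_p(T^a V^b),
\]
which is exactly the row generator indexed by $(a,b)$.

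The computations here are direct, so the only real obstacle is choosing the test function $f$. Natural symmetric candidates such as $f = \Phi_p(x^\alpha y^\beta)$ produce only weaker ``sum along a line'' relations in $\cA/\I^\times$ and do not isolate individual row generators. The key point is that the monomial $x^a y^b$ has a sparse characteristic polynomial $(T^p-1)^p$ (supported only in degrees divisible by $p$), so that a single coefficient comparison suffices to extract one row generator at a time.
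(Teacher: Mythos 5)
Your proposal is correct and uses the same core idea as the paper: compare a single coefficient (the trace, i.e.\ the coefficient of $T^{p^2-1}$) in the two characteristic polynomials arising from the $\times$-homomorphism condition, and then invoke the transpose/Cartier-duality involution $\iota$ to dispatch the other family of generators. The only real difference is organizational: you handle every $(a,b)$ at once by choosing the test function $x^ay^b$ and observing its sparse characteristic polynomial $(T^p-1)^p$, whereas the paper only treats $(a,b)=(1,0)$ (showing $\Phi_p(S)\Phi_p(U)\in C^\KMD$ via $\Tr(h^*(X))=\Phi_p(S)\Phi_p(U)$) and then uses the $\GL_2(\F_p)$-stability of $\I^\KMD$ to propagate to the remaining generators. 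Your route avoids the auxiliary lemma that $\I^\KMD$ is $\GL_2(\F_p)$-stable, at the cost of a slightly more involved direct computation; either version is fine. One small caution: your formula $(x^ay^b)(h(c,d))=S^{ac}T^{ad}U^{bc}V^{bd}$ assumes $h(c,d)=(S^cT^d,U^cV^d)$, whereas the paper's explicit $h^*$ uses the transposed convention $h(c,d)=(S^cU^d,T^cV^d)$; under the paper's convention the sum you extract is $\Phi_p(S^aT^b)\Phi_p(U^aV^b)$ rather than $\Phi_p(S^aU^b)\Phi_p(T^aV^b)$. This only swaps which of the two generating families lands in $\I^\times$ versus $\I^{\times,\vee}$, and since $\I^\KMD$ is the sum of both, the conclusion $\I\subset\I^\KMD$ is unaffected.
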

\begin{proof}
By the definition of $\I$, it suffices to prove that the column and row generators are in $\I^\KMD$. We use the following lemma.

\begin{lem}
The ideal $\I^\KMD \subset \A$ is stable under the left and right $\GL_2(\F_p)$ actions on $\A$.
\end{lem}
\begin{proof}
It suffices to show that the actions on $\A$ descend to actions on $\A/\I^\KMD$, or, equivalently, that the action on $\uHom_\Z((\Z/p\Z)^2,\mu_p \times \mu_p)$ restrict to actions on $\uHom^\KMD_\Z((\Z/p\Z)^2,\mu_p \times \mu_p)$. But this is clear, since isomorphisms are $\times$-homomorphisms and $\times$-homomorphisms are closed under composition.
\end{proof}

Now, to prove the proposition, it suffices to show that $\Phi_p(S)\Phi_p(U), \Phi_p(S)\Phi_p(T) \in \I^{\KMD}$. Indeed, since the other column and row generators are in the $\GL_2(\F_p)$-orbits of these two, the lemma will then give us the proposition.

Now, we let $R^\KMD$ denote the ideal cutting out $\uHom^\times_\Z((\Z/p\Z)^2,\mu_p \times \mu_p)$. We will first show that $\Phi_p(S)\Phi_p(T) \in R^\KMD$.

In order to do this, we give a more explicit description of some elements of the ideal $C^\KMD$. We let $h$ denote the universal homomorphism
$$
h=\ttmat{S}{T}{U}{V}: (\Z/p\Z)^2_{/\A} \to (\mu_p \times \mu_p)_{/\A};~ (1,0) \mapsto (S,U), (0,1) \mapsto (T,V).
$$
The induced map on coordinate rings is
$$
h^*: {\A[X,Y]}/{(X^p-1,Y^p-1)} \to \A^{(\Z/p\Z)^2}
$$
given by
$$
X \mapsto \sum_{i,j=0}^{p-1} S^iT^j e_{(i,j)}
$$
$$
Y \mapsto \sum_{i,j=0}^{p-1} U^iV^j e_{(i,j)},
$$
where $e_{(i,j)}$ is the standard basis vector in $ \A^{(\Z/p\Z)^2}$ corresponding to $(i,j)\in (\Z/p\Z)^2$. 

From equation \eqref{eq: times homom}, we see that $\Tr(h^*(a))-\Tr(a) \in R^\KMD$ for any $a \in \A[X,Y]/(X^p-1,Y^p-1)$.  Consider the element $X \in \A[X,Y]/(X^p-1,Y^p-1)$. A simple calculation shows that $\Tr(X)=0$. On the other hand, we compute
\[
\Tr(h^*(X))= \Tr \left(\sum_{i,j=0}^{p-1} S^iT^j e_{(i,j)}\right) = \sum_{i,j=0}^{p-1} S^iT^j = \Phi_p(S)\Phi_p(T).
\]
This shows that $\Phi_p(S)\Phi_p(T) \in R^\KMD$.

Now we let $C^\KMD$ denote the ideal cutting out $\uHom^\times_\Z((\mu_p \times \mu_p)^\vee,((\Z/p\Z)^2)^\vee)$, thought of as a subscheme of $\uHom_\Z((\Z/p\Z)^2,\mu_p \times \mu_p)$ via the isomorphism 
\begin{equation*}\tag{*}
\uHom_\Z((\Z/p\Z)^2,\mu_p \times \mu_p) \simeq \uHom_\Z((\mu_p \times \mu_p)^\vee,((\Z/p\Z)^2)^\vee).
\end{equation*}
Then $C^\KMD$ admits the same explicit description as $R^\KMD$, except that we replace the universal homomorphism $h$ with its Cartier dual $h^\vee$. A simple computation shows that, under the isomorphism (*), $h^\vee$ is
$$
h^\vee =\ttmat{S}{U}{T}{V}: (\Z/p\Z)^2_{/\A} \to (\mu_p \times \mu_p)_{/\A}.
$$
That is, $h^\vee$ is obtained from $h$ by reversing the roles of $T$ and $U$. Then, since we have shown $\Phi_p(S)\Phi_p(T) \in R^\KMD$, it follows formally that $\Phi_p(S)\Phi_p(U) \in C^\KMD$.

By definition, $\I^\KMD = R^\KMD+C^\KMD$, and so we have $\Phi_p(S)\Phi_p(T), \Phi_p(S)\Phi_p(U) \in \I^\KMD$, which completes the proof.
\end{proof}
\begin{rem}
The proof shows that $R \subset R^\KMD$ and $C \subset C^\KMD$. We suspect that these inclusions are equalities, and checked by hand that this is true for $p=2,3$. We were unable to find a proof in general.
\end{rem}

We can now prove Theorem \ref{thm: KMD}.

\begin{thmB}
There is an equality
$$
\uHom_{\Z}^\KMD((\Z/p\Z)^2,\mu_p \times \mu_p) = \uHom_{\Z}^\full((\Z/p\Z)^2,\mu_p \times \mu_p)
$$
of closed subschemes of $\uHom_{\Z}((\Z/p\Z)^2,\mu_p \times \mu_p)$.
\end{thmB}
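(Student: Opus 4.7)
The plan is to deduce the theorem by combining Proposition \ref{prop: map from KMD} with the flatness result of Theorem \ref{thm: main} and generic agreement over $\Z[1/p]$. The containment $\I \subset \I^\KMD$ has already been established in Proposition \ref{prop: map from KMD}, which yields a closed immersion $\uHom^\KMD_\Z((\Z/p\Z)^2,\mu_p \times \mu_p) \hookrightarrow \uHom^\full_\Z((\Z/p\Z)^2,\mu_p \times \mu_p)$, or equivalently a surjection $\A/\I \onto \A/\I^\KMD$. So everything reduces to showing that this surjection is injective, i.e., that $\I^\KMD/\I = 0$ inside $\A/\I$.

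First I would verify that $\I^\KMD/\I$ is $p$-power torsion by comparing the two schemes over $\Z[1/p]$. By Proposition \ref{prop: generic}, $\uHom^\full_\Z((\Z/p\Z)^2,\mu_p \times \mu_p) \otimes_\Z \Z[1/p]$ is \'etale-locally identified with $\GL_2(\F_p)$, and in particular with $\Isom_{\Z[1/p]}((\Z/p\Z)^2, \mu_p \times \mu_p)$. On the other hand, over $\Z[1/p]$ both $(\Z/p\Z)^2$ and $\mu_p \times \mu_p$ are \'etale group schemes of invertible order, so Remark \ref{rem: etale KMD} gives $\uHom^\KMD_\Z((\Z/p\Z)^2,\mu_p \times \mu_p) \otimes_\Z \Z[1/p] = \Isom_{\Z[1/p]}((\Z/p\Z)^2, \mu_p \times \mu_p)$ as well. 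Hence the two closed subschemes of $\uHom_\Z((\Z/p\Z)^2,\mu_p \times \mu_p)$ coincide after inverting $p$, so $\I[1/p] = \I^\KMD[1/p]$ and every element of the quotient $\I^\KMD/\I$ is killed by a power of $p$.

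Next I would use Theorem \ref{thm: main} to conclude. Since $\A/\I$ is flat (hence $p$-torsion free) over $\Z$, any $\A$-submodule of $\A/\I$ — in particular the image of $\I^\KMD/\I$ — is $p$-torsion free. A module that is simultaneously $p$-power torsion and $p$-torsion free is zero, so $\I^\KMD = \I$ and the two closed subschemes agree. The main ingredients (flatness and generic identification) are already in place, so there is no real obstacle beyond assembling them carefully; the substantive work is the flatness theorem together with Proposition \ref{prop: map from KMD}.
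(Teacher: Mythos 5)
Your proposal is correct and follows essentially the same route as the paper: Proposition \ref{prop: map from KMD} gives the surjection $\A/\I \onto \A/\I^\KMD$, Proposition \ref{prop: generic} together with Remark \ref{rem: etale KMD} shows the kernel is $p$-power torsion, and flatness of $\A/\I$ from Theorem \ref{thm: main} forces the kernel to vanish. The only cosmetic difference is that you phrase the kernel as $\I^\KMD/\I$ rather than $\ker(\phi)$, which is the same submodule of $\A/\I$.
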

\begin{proof}
By Proposition \ref{prop: map from KMD}, there is a surjective $\A$-algebra homomorphism
$$
\phi: \A/\I \onto \A/\I^\KMD.
$$
Moreover, by Proposition \ref{prop: generic} and Remark \ref{rem: etale KMD}, we have that $\phi \otimes_{\Z} \Z[1/p]$ is an isomorphism. In particular, $\ker(\phi)$ is $p$-torsion. But by Theorem \ref{thm: main}, $\A/\I$ is flat as a $\Z$-algebra, and so it has no $p$-torsion. Hence $\ker(\phi)=0.$ 
\end{proof}

Finally, we give an example to show that the $\KMD$-type level structure does not give a flat space in general.

\begin{exmp}
\label{exmp: counter example}
Let $p=2$ and let $S=\mathbb{A}^1_{/\F_2}=\Spec(\F_2[t])$ and let $S^* = S \setminus \{0\}$. Let $H=\Spec(A)$ be the group over $S$ given as follows. The ring $A$ is $A=\F_2[t][X]/(X^2)$, and the comultiplication $A \to A \otimes_{\F_2[t]} A$ is given by $X \mapsto X \otimes 1 + 1 \otimes X + t X \otimes X$. The generic fiber is $H \times_S S^* \cong \mu_2$ and the special fiber is $H \times_S \{0\} \cong \alpha_2$ (c.f. \cite[Section 1]{oss}).

Now let $G=H \times H$. We'll show that $X=\uHom_S^\KMD((\Z/2\Z)^2,G)$ is not flat over $S$. By Theorem \ref{thm: KMD}, we have that $X \times_S S^*$ is locally free of rank $\# \GL_2(\F_2)=6$ over $S^*$. So it suffices to show that $X \times_S \{0\} = \uHom^\KMD_{\F_2}((\Z/2\Z)^2,\alpha_2^2)$ has rank different from $6$.

We first compute $\uHom^\times_{\F_2}((\Z/2\Z)^2,\alpha_2^2)$. We have $\uHom_{\F_2}((\Z/2\Z)^2,\alpha_2^2) =\alpha_2^4$, which we write as $\Spec(B)$ where $B=\F_2[x_1,y_1,x_2,y_2]/(x_1^2,y_1^2,x_2^2,y_2^2)$, and we write the universal homomorphism as
\[
h: (\Z/2\Z)^2_{/B} \to (\alpha_2^2)_{/B}, ~ (1,0) \mapsto (x_1,y_1),~(0,1) \mapsto (x_2,y_2).
\]
In terms of coordinate rings, this is
\[
h^*: B[X,Y]/(X^2,Y^2) \to B^{(\Z/2\Z)^2}, ~ X \mapsto \sum_{(i,j)} (ix_1+jx_2)e_{(i,j)}, ~ Y \mapsto \sum_{(i,j)} (iy_1+jy_2)e_{(i,j)},
\]
where we retain the notation $e_{(i,j)}$ as above. If we let $f=\sum_{(k,l)} b_{kl} X^kY^l \in B[X,Y]/(X^2,Y^2)$ denote a generic section, 
then the condition that $h$ be a $\times$-homomorphism is given by the equality
\[
\det(f)=\det(h^*(f)),
\]
thought of as an equation of polynomials in $B[b_{kl}]$. We see that $\det(f)=b_{00}^4$, and compute that the set of coefficients of $\det(h^*(f))-b_{00}^4$ is 
\[
\{y_1y_2,x_1x_2, x_1y_2+x_2y_1, x_1x_2y_1y_2\}.
\] 
It follows that the ideal in $B$ cutting $\uHom^\times_{\F_2}((\Z/2\Z)^2,\alpha_2^2)$ is generated by this set. A quick computation shows that this ideal has dimension $8$ over $\F_2$. Since $\dim_{\F_2}(B)=16$, we see that $\uHom^\times_{\F_2}((\Z/2\Z)^2,\alpha_2^2)$ has rank $8=16-8$ over $\Spec(\F_2)$.

Next we compute the ideal defining $\uHom^\times_{\F_2}((\alpha_2^2)^\vee,((\Z/2\Z)^2)^\vee)$. The map that $h^\vee$ induces on coordinate rings a homomorphism
\[
(h^\vee)^*: B[S,T]/(S^2-1,T^2-1) \to B[X,Y]/(X^2,Y^2).
\]
Writing $s=S-1$ and $t=T-1$, we can write the universal section as $g= \sum_{(k,l)} c_{kl} s^kt^l \in B[S,T]/(S^2-1,T^2-1)$. However, noting that $(h^\vee)^*(g) \in c_{00} + (X,Y)B[X,Y]/(X^2,Y^2)$, one sees immediately that 
\[
\det(g) = c_{00}^4 = \det((h^\vee)^*(g)).
\]
Hence \emph{there are no conditions} for $h^\vee$ to be a $\times$-homomorphism and 
\[
\uHom^\times_{\F_2}((\alpha_2^2)^\vee,((\Z/2\Z)^2)^\vee) = \uHom_{\F_2}((\alpha_2^2)^\vee,((\Z/2\Z)^2)^\vee).
\]

So we see that $X \times_S \{0\}=\uHom^\KMD_{\F_2}((\Z/2\Z)^2,\alpha_2^2) = \uHom^\times_{\F_2}((\Z/2\Z)^2,\alpha_2^2)$ has rank $8$ over $\Spec(\F_2)$. Since $X$ has generic rank $6$, this implies that $X$ is not flat over $S$.
\end{exmp}

In the example, we chose $p=2$ for simplicity of computation -- the same example would work for any $p$. Other, more complicated, examples may be found over over base schemes than $S=\mathbb{A}^1_{/\F_p}$ (such as $S=\Spec(\Z_{(p)})$) -- the main point is the following. For the same reason that, in the example, $(h^\vee)^*$ is automatically a $\times$-homomorphism, any homomorphism between connected group schemes over a field will be a $\times$-homomorphism. This means that, for groups that are residually connected-connected type (for example, torsion subgroups of supersingular abelian varieties), adding the ``dual'' condition will not add any conditions that survive modulo $p$. To define a good notion of level structures for such groups, a new idea is needed.

Department of Mathematics, UCLA, Los Angeles, CA 90095-1555

email: wake@math.ucla.edu

\begin{thebibliography}{9}

\bibitem[B-H]{bruns-herzog}
Bruns, W.; Herzog, J.
Cohen-Macaulay rings. 
Cambridge Studies in Advanced Mathematics, 39. Cambridge University Press, Cambridge, 1993. xii+403 pp. ISBN: 0-521-41068-1

\bibitem[C-N]{chai-norman}
Chai, C; Norman, P.
Bad reduction of the Siegel moduli scheme of genus two with $\Gamma_0(p)$-level structure. 
Amer. J. Math. 112 (1990), no. 6, 1003-1071. 

\bibitem[D-R]{deligne-rapoport}
Deligne, P.; Rapoport, M.
Les sch\'emas de modules de courbes elliptiques. (French) Modular functions of one variable, II (Proc. Internat. Summer School, Univ. Antwerp, Antwerp, 1972), pp. 143-316. Lecture Notes in Math., Vol. 349, Springer, Berlin, 1973. 

\bibitem[Dr]{drinfeld}
Drinfelʹd, V. G. 
Elliptic modules. (Russian) 
Mat. Sb. (N.S.) 94(136) (1974), 594-627, 656. 

\bibitem[H-R]{haines-rapoport}
Haines, T.; Rapoport, M. Shimura varieties with $\Gamma_1(p)$-level via Hecke algebra isomorphisms: the Drinfeld case, Ann. Scient. Ecole Norm. Sup., 4e serie, t. 45 (2012), 719-785.

\bibitem[H-T]{harris-taylor}
Harris, M.; Taylor, R.
The geometry and cohomology of some simple Shimura varieties. 
With an appendix by Vladimir G. Berkovich. Annals of Mathematics Studies, 151. Princeton University Press, Princeton, NJ, 2001. viii+276 pp. ISBN: 0-691-09090-4 

\bibitem[K-M]{katz-mazur}
Katz, N.; Mazur, B. 
Arithmetic moduli of elliptic curves. 
Annals of Mathematics Studies, 108. Princeton University Press, Princeton, NJ, 1985. xiv+514 pp. ISBN: 0-691-08349-5; 0-691-08352-5 

\bibitem[O-S-S]{oss}
Sekiguchi, T; Oort, F.; Suwa, N. 
On the deformation of Artin-Schreier to Kummer. 
Ann. Sci. \'Ecole Norm. Sup. (4) 22 (1989), no. 3, 345-375. 

\bibitem[O-T]{oort-tate}
Tate, J.; Oort, F.
Group schemes of prime order. 
Ann. Sci. \'Ecole Norm. Sup. (4) 3 1970 1-21. 

\bibitem[P]{pappas}
Pappas, G;
Arithmetic models for Hilbert modular varieties. 
Compositio Math. 98 (1995), no. 1, 43-76.

\newcommand{\etalchar}[1]{$^{#1}$}
\bibitem[S{\etalchar{+}}]{sage}
\emph{{S}ageMath, the {S}age {M}athematics {S}oftware {S}ystem}, The Sage Developers, 2015, {\tt http://www.sagemath.org}.



\end{thebibliography}
\end{document}